\providecommand{\AMSclass}[1]{\textbf{\textit{AMS classification.}} #1}
\title{The Fortuin-Kasteleyn polynomial as a bialgebra morphism and applications to the Tutte polynomial}
\date{}
\author{Lo\"ic Foissy}
\affil{\small{Univ. Littoral C\^ote d'Opale, UR 2597
LMPA, Laboratoire de Math\'ematiques Pures et Appliqu\'ees Joseph Liouville
F-62100 Calais, France}.\\ Email: \texttt{foissy@univ-littoral.fr}}
\author{Claudia Malvenuto}
\affil{\small{Dipartimento di Matematica ``Guido Castelnuovo'', Sapienza Universit\`a di Roma\\  P.le A. Moro, 5 00185 Rome, Italy.}\\Email: \texttt{malvenuto@mat.uniroma1.it}}
\newcommand{\tun}{\begin{picture}(5,0)(-2,-1)
\put(0,0){\circle*{2}}
\end{picture}}
\newcommand{\grun}{\begin{tikzpicture}[line cap=round,line join=round,>=triangle 45,x=0.5cm,y=0.5cm]
\clip(-0.2,-0.1) rectangle (0.2,0.2);
\begin{scriptsize}
\draw [fill=black] (0.,0.) circle (1pt);
\end{scriptsize}
\end{tikzpicture}}
\newcommand{\grdeux}{\begin{tikzpicture}[line cap=round,line join=round,>=triangle 45,x=0.5cm,y=0.5cm]
\clip(-.2,-.1) rectangle (0.2,0.7);
\draw [line width=.5pt] (0.,0.5)-- (0.,0.);
\begin{scriptsize}
\draw [fill=black] (0.,0.) circle (1pt);
\draw [fill=black] (0.,0.5) circle (1pt);
\end{scriptsize}
\end{tikzpicture}}
\newcommand{\grtroisun}{\begin{tikzpicture}[line cap=round,line join=round,>=triangle 45,x=0.5cm,y=0.5cm]
\clip(-0.5,-0.1) rectangle (0.5,0.7);
\draw [line width=0.5pt] (0.,0.)-- (-0.3,0.5);
\draw [line width=0.5pt] (0.,0.)-- (0.3,0.5);
\draw [line width=0.5pt] (-0.3,0.5)-- (0.3,0.5);
\begin{scriptsize}
\draw [fill=black] (-0.3,0.5) circle (1pt);
\draw [fill=black] (0.,0.) circle (1pt);
\draw [fill=black] (0.3,0.5) circle (1pt);
\end{scriptsize}
\end{tikzpicture}}
\newcommand{\grtroisdeux}{\begin{tikzpicture}[line cap=round,line join=round,>=triangle 45,x=0.5cm,y=0.5cm]
\clip(-0.5,-0.1) rectangle (0.5,0.7);
\draw [line width=0.5pt] (0.,0.)-- (-0.3,0.5);
\draw [line width=0.5pt] (0.,0.)-- (0.3,0.5);
\begin{scriptsize}
\draw [fill=black] (-0.3,0.5) circle (1pt);
\draw [fill=black] (0.,0.) circle (1pt);
\draw [fill=black] (0.3,0.5) circle (1pt);
\end{scriptsize}
\end{tikzpicture}}
\newcommand{\grquatreun}{\begin{tikzpicture}[line cap=round,line join=round,>=triangle 45,x=0.5cm,y=0.5cm]
\clip(-0.2,-0.1) rectangle (0.7,0.7);
\begin{scriptsize}
\draw [fill=black] (0.,0.) circle (1pt);
\draw [fill=black] (0.,0.5) circle (1pt);
\draw [fill=black] (0.5,0.5) circle (1pt);
\draw [line width=.5pt] (0.,0.)-- (0.5,0.);
\draw [line width=0.5pt] (0.,0.)-- (0.,0.5);
\draw [line width=0.5pt] (0.5,0.)-- (0.5,0.5);
\draw [line width=0.5pt] (0.0,0.)-- (0.5,0.5);
\draw [line width=0.5pt] (0.0,0.5)-- (0.5,0.5);
\draw [line width=0.5pt] (0.0,0.5)-- (0.5,0.);
\draw [fill=black] (0.5,0.) circle (1pt);
\end{scriptsize}
\end{tikzpicture}}
\newcommand{\grquatredeux}{\begin{tikzpicture}[line cap=round,line join=round,>=triangle 45,x=0.5cm,y=0.5cm]
\clip(-0.2,-0.1) rectangle (0.7,0.7);
\begin{scriptsize}
\draw [fill=black] (0.,0.) circle (1pt);
\draw [fill=black] (0.,0.5) circle (1pt);
\draw [fill=black] (0.5,0.5) circle (1pt);
\draw [line width=.5pt] (0.,0.)-- (0.5,0.);
\draw [line width=0.5pt] (0.,0.)-- (0.,0.5);
\draw [line width=0.5pt] (0.5,0.)-- (0.5,0.5);
\draw [line width=0.5pt] (0.,0.)-- (0.5,0.5);
\draw [line width=0.5pt] (0.,0.5)-- (0.5,0.5);
\draw [fill=black] (0.5,0.) circle (1pt);
\end{scriptsize}
\end{tikzpicture}}
\newcommand{\grquatretrois}{\begin{tikzpicture}[line cap=round,line join=round,>=triangle 45,x=0.5cm,y=0.5cm]
\clip(-0.2,-0.1) rectangle (0.7,0.7);
\begin{scriptsize}
\draw [fill=black] (0.,0.) circle (1pt);
\draw [fill=black] (0.,0.5) circle (1pt);
\draw [fill=black] (0.5,0.5) circle (1pt);
\draw [line width=.5pt] (0.,0.)-- (0.5,0.);
\draw [line width=0.5pt] (0.,0.)-- (0.,0.5);
\draw [line width=0.5pt] (0.5,0.)-- (0.5,0.5);
\draw [line width=0.5pt] (0.,0.)-- (0.5,0.5);
\draw [fill=black] (0.5,0.) circle (1pt);
\end{scriptsize}
\end{tikzpicture}}
\newcommand{\grquatrequatre}{\begin{tikzpicture}[line cap=round,line join=round,>=triangle 45,x=0.5cm,y=0.5cm]
\clip(-0.2,-0.1) rectangle (0.7,0.7);
\begin{scriptsize}
\draw [fill=black] (0.,0.) circle (1pt);
\draw [fill=black] (0.,0.5) circle (1pt);
\draw [fill=black] (0.5,0.5) circle (1pt);
\draw [line width=.5pt] (0.,0.)-- (0.5,0.);
\draw [line width=0.5pt] (0.,0.)-- (0.,0.5);
\draw [line width=0.5pt] (0.5,0.)-- (0.5,0.5);
\draw [line width=0.5pt] (0.,0.5)-- (0.5,0.5);
\draw [fill=black] (0.5,0.) circle (1pt);
\end{scriptsize}
\end{tikzpicture}}
\newcommand{\grquatrecinq}{\begin{tikzpicture}[line cap=round,line join=round,>=triangle 45,x=0.5cm,y=0.5cm]
\clip(-0.2,-0.1) rectangle (0.7,0.7);
\begin{scriptsize}
\draw [fill=black] (0.,0.) circle (1pt);
\draw [fill=black] (0.,0.5) circle (1pt);
\draw [fill=black] (0.5,0.5) circle (1pt);
\draw [line width=.5pt] (0.,0.)-- (0.5,0.);
\draw [line width=0.5pt] (0.,0.)-- (0.,0.5);
\draw [line width=0.5pt] (0.,0.)-- (0.5,0.5);
\draw [fill=black] (0.5,0.) circle (1pt);
\end{scriptsize}
\end{tikzpicture}}
\newcommand{\grquatresix}{\begin{tikzpicture}[line cap=round,line join=round,>=triangle 45,x=0.5cm,y=0.5cm]
\clip(-0.2,-0.1) rectangle (0.7,0.7);
\begin{scriptsize}
\draw [fill=black] (0.,0.) circle (1pt);
\draw [fill=black] (0.,0.5) circle (1pt);
\draw [fill=black] (0.5,0.5) circle (1pt);
\draw [line width=.5pt] (0.,0.)-- (0.5,0.);
\draw [line width=0.5pt] (0.,0.)-- (0.,0.5);
\draw [line width=0.5pt] (0.5,0.)-- (0.5,0.5);
\draw [fill=black] (0.5,0.) circle (1pt);
\end{scriptsize}
\end{tikzpicture}}
\newcommand{\grdeuxo}{\begin{tikzpicture}[line cap=round,line join=round,>=triangle 45,x=.4cm,y=0.4cm]
\clip(-0.3,-0.1) rectangle (0.3,1.2);
\draw[->]  (0.,0.)-- (0.,1.);
\draw [fill=black] (0.,0.) circle (.8pt);
\draw [fill=black] (0.,1.) circle (.8pt);
\end{tikzpicture}}
\newcommand{\cycle}{\begin{tikzpicture}[line cap=round,line join=round,>=triangle 45,x=0.4cm,y=0.4cm]
\clip(3.2,-0.2) rectangle (4.6,2.2);
\draw [shift={(5.,1.)},line width=.5pt]  plot[domain=2.356194490192345:3.9269908169872414,variable=\t]({1.*1.4142135623730951*cos(\t r)+0.*1.4142135623730951*sin(\t r)},{0.*1.4142135623730951*cos(\t r)+1.*1.4142135623730951*sin(\t r)});
\draw [shift={(3.,1.)},line width=.5pt]  plot[domain=-0.7853981633974483:0.7853981633974483,variable=\t]({1.*1.4142135623730951*cos(\t r)+0.*1.4142135623730951*sin(\t r)},{0.*1.4142135623730951*cos(\t r)+1.*1.4142135623730951*sin(\t r)});
\draw [->,line width=.5pt] (4.1,1.9) -- (4.,2.);
\draw [->,line width=.5pt] (3.9,0.1) -- (4.,0.);
\begin{scriptsize}
\draw [fill=black] (4.,0.) circle (.8pt);
\draw [fill=black] (4.,2.) circle (.8pt);
\end{scriptsize}
\end{tikzpicture}}
\newcommand{\grtrois}{\begin{tikzpicture}[line cap=round,line join=round,>=triangle 45,x=.4cm,y=0.4cm]
\clip(-0.1,-0.1) rectangle (1.1,1.2);
\draw  (0.5,0.)-- (0.,1.);
\draw  (0.,1.)-- (1.,1.);
\draw (1.,1.)-- (0.5,0.);
\draw [fill=black] (0.5,0.) circle (.8pt);
\draw [fill=black] (0.,1.) circle (.8pt);
\draw [fill=black] (1.,1.) circle (.8pt);
\end{tikzpicture}}
\newcommand{\grtroisoun}{\begin{tikzpicture}[line cap=round,line join=round,>=triangle 45,x=.4cm,y=0.4cm]
\clip(-0.1,-0.1) rectangle (1.1,1.2);
\draw[->]  (0.5,0.)-- (0.,1.);
\draw [->] (0.,1.)-- (1.,1.);
\draw[->] (1.,1.)-- (0.5,0.);
\draw [fill=black] (0.5,0.) circle (.8pt);
\draw [fill=black] (0.,1.) circle (.8pt);
\draw [fill=black] (1.,1.) circle (.8pt);
\end{tikzpicture}}
\newcommand{\grtroisodeux}{\begin{tikzpicture}[line cap=round,line join=round,>=triangle 45,x=.4cm,y=0.4cm]
\clip(-0.1,-0.1) rectangle (1.1,1.2);
\draw[->]  (0.5,0.)-- (0.,1.);
\draw [->] (0.,1.)-- (1.,1.);
\draw[<-] (1.,1.)-- (0.5,0.);
\draw [fill=black] (0.5,0.) circle (.8pt);
\draw [fill=black] (0.,1.) circle (.8pt);
\draw [fill=black] (1.,1.) circle (.8pt);
\end{tikzpicture}}
\theoremstyle{plain}
\newtheorem{theo}{Theorem}[section]
\newtheorem{lemma}[theo]{Lemma}
\newtheorem{cor}[theo]{Corollary}
\newtheorem{prop}[theo]{Proposition}
\newtheorem{defi}[theo]{Definition}
\theoremstyle{remark}
\newtheorem{remark}{Remark}[section]
\newtheorem{notation}{Notations}[section]
\newtheorem{example}{Example}[section]
\newcommand{\K}{\mathbb{K}}
\newcommand{\N}{\mathbb{N}}
\newcommand{\Z}{\mathbb{Z}}
\newcommand{\bfG}{\mathbf{G}}
\newcommand{\id}{\mathrm{Id}}
\newcommand{\eq}{\mathcal{E}}
\newcommand{\cl}{\mathrm{cl}}
\newcommand{\Char}{\mathrm{Char}}
\newcommand{\tdelta}{\tilde{\Delta}}
\renewcommand{\ker}{\mathrm{Ker}}
\newcommand{\cc}{\mathrm{cc}}
\newcommand{\calH}{\mathcal{H}}
\newcommand{\bfH}{\mathbf{H}}
\newcommand{\calA}{\mathcal{A}}
\newcommand{\calB}{\mathcal{B}}
\newcommand{\chara}{\mathrm{Char}}
\renewcommand{\ker}{\mathrm{Ker}}
\newcommand{\gr}{\mathrm{gr}}
\newcommand{\potac}{\mathcal{PO}_{\mathit{tac}}}
\newcommand{\cov}{\mathcal{C}}
\newcommand{\covfor}{\mathcal{CF}}
\newcommand{\spann}{\mathcal{S}}
\newcommand{\spanfor}{\mathcal{SF}}
\newcommand{\ori}{\mathcal{O}}
\newcommand{\oriac}{\mathcal{O}_{\mathit{ac}}}
\newcommand{\opc}{\mathcal{OPC}}
\newcommand{\osc}{\mathcal{O}_{sc}}
\newcommand{\pc}{\mathcal{PC}}
\begin{document}

\maketitle

\begin{abstract}
We compute an explicit formula for the antipode of the double bialgebra of graphs \cite{Foissy36} in terms of totally acyclic partial orientations, using some general results on double bialgebras \cite{Foissy40}. In analogy to what was already  proven in Hopf-algebraic terms for the chromatic polynomial of a graph, we show that the Fortuin-Kasteleyn polynomial (a variant of the Tutte polynomial) is a morphism of the double algebra of graphs into that of polynomials, which generalizes the chromatic polynomial. When specialized at particular values, we give  combinatorial interpretations of the Tutte  polynomial of a graph, via covering graphs and covering forests, and of the  Fortuin-Kasteleyn polynomial, via pairs of vertex--edge colorings. Finally we show that the map associating to a graph all its orientations is a Hopf morphism from the double bialgebra of graphs into that one of oriented graphs, allowing to give interpretations of the  Fortuin-Kasteleyn polynomial when computed at negative values.
\end{abstract}

\AMSclass{16T05 05C25}

\tableofcontents

\section*{Introduction}

In \cite{Birkhoff1912} George D. Birkhoff  introduced the notion of chromatic polynomial. It counts the number of proper graph colorings as a function of the number of colors. Later  \cite{Birkhoff1946} Birkhoff and Lewis studied it extensively in the restricted case of planar graphs in the attempt to solve the so--called ``four color problem''.  In 1932, Hassler Whitney  \cite{Whitney1932} and then William Tutte  \cite{Tutte1954}  generalized the chromatic polynomial to a new polynomial, which Tutte called the dichromate of a graph but it is better known as the Tutte (or Tutte--Whitney) polynomial, and plays an important role in graph theory. It is a polynomial in two variables associated to an undirected graph and tells a lot of information about the way a graph is connected. It actually  contains several specializations from other domains, such as the Jones polynomial from knot theory and the partition functions of the Potts model from statistical physics.  Thanks to his results, it is observed there that the theory of spanning trees links the theory of graph--colorings to that of electrical networks. Another polynomial related to it is the Fortuin--Kasteleyn polynomial: it appears in  the random-cluster model, a generalization both of the percolation model and of the Ising model. Cees Fortuin and Cornelius Kasteleyn in \cite{Fortuin1971, Fortuin1972}  show that the theory of the random-cluster model is intimately connected with the combinatorial theory of graphs. The physical question for a medium with randomly distributed pores through which a liquid percolates is modeled mathematically as a three-dimensional network of $n \times n \times  n$ vertices, usually called ``sites'', in which the edge or ``bonds'' between each two neighbors may be open (allowing the liquid through) with probability $p$, or closed with probability $1 - p$, and they are assumed to be independent. Therefore, for a given $p$, it is natural to ask for the probability that an open path (each of whose links is an ``open'' bond) exists from the top to the bottom, and for the behavior of the system for large $n$. This problem, called  bond percolation model, was introduced in 1957  by Broadbent and Hammersley \cite{Broadbent1957} and has been studied intensively by mathematicians and physicists since then.

Since the seminal work of \cite{Joni1982} it was clear that standard operations/transformations on combinatorial objects have an algebraic nature, and even more coalgebras operations of cutting the objects into smaller pieces have strong significance. In fact the three graph polynomials above mentioned satisfy a recurrence based on some graph operations called deletion or contraction of edges. The advantage of the Tutte and the Fortuin-Kasteleyn polynomials is that during the recursive process they loose much less informations than the chromatic polynomial. The way we treat these polynomials and their significance in graph theory, however, takes a different approach: that of combinatorial Hopf algebras. 

Several notable results that go in this direction are contained in the first author's work \cite{Foissy36}, where the chromatic polynomial is characterized as the unique polynomial invariant of graphs, compatible with two interacting bialgebras structures on graphs: the first coproduct is given by partitions of vertices into two parts, the second one by a contraction--extraction process. This gives Hopf-algebraic proofs of Rota's result on the signs of coefficients of chromatic polynomials and of Stanley's interpretation of the values at negative integers of chromatic polynomials. 

The aim of the present work is to  study this phenomenon in more detail, to extend it to the other polynomials mentioned inserting them  into the theory of combinatorial Hopf algebras, and to recover new simpler proofs of  classical results on Tutte and Fortuin-Kasteleyn polynomials. 

We start recalling in  Section \ref{sec:RemDoubleAlg} the notion of double bialgebras due to the first author (which appears in \cite{Foissy40} as cointeracting bialgebras), that is to say bialgebras with two coproducts, the first one being a comodule morphism for the coaction induced by the second one, and the main related results on the monoid of characters and their actions that will be used later. In Section \ref{sec:RemHopfGraphs} we describe the central example of a double bialgebra on graphs, as introduced in \cite{Foissy36} and a variation on oriented graphs, in which the first coproduct comes from splitting a graph $G$ in two parts, taking the induced subgraphs via a bipartition of its vertices. The second coproduct comes from a ``deletion/contraction'' process. We are able to express the antipode of the second Hopf algebra. We then go to  the Fortuin--Kasteleyn polynomial, seen as a Hopf morphism of the Hopf algebra of graphs into the Hopf algebra of polynomials in one indeterminate, and express some invariants of graphs as characters making use of the general results on bialgebras. The so--developed algebraic frame allows us to give in Section \ref{sec:CombInterp} some combinatorial interpretation to both Tutte and Fortuin-Kasteleyn polynomials when specialized at particular values: these results are analogue of the statement in \cite{Foissy36} that the chromatic polynomial of a graph is the unique polynomial invariant on graphs compatible with both bialgebraic structures. We end with some result on orientations of graphs, in Section  \ref{sec:Orientations}: the map associating to a graph the sum of all possible orientations on G is a Hopf--morphism, and we recover -- giving easy proofs  -- some known but complex results on specializations of Tutte's polynomial.\\

\textbf{Acknowledgments}. The author acknowledges support from the grant ANR-20-CE40-0007 \emph{Combinatoire Algébrique, Résurgence, Probabilités Libres et Opérades}.\\

\begin{notation} \begin{enumerate}
\item We denote by $\K$ a commutative field of characteristic $0$. Any vector space in this field will be taken over $\K$.
\item For any $n\in \N$, we denote by $[n]$ the set the first $n$ strictly positive integers $\{1,\ldots,n\}$. In particular, $[0]=\emptyset$.
\end{enumerate}\end{notation}

\section{Reminders on double bialgebras} \label{sec:RemDoubleAlg}

\begin{defi}
A double bialgebra is a quadruple $(A,m,\Delta,\delta)$ with a product  $m: A\otimes A\rightarrow A $ and two coproducts 
$\Delta, \delta : A\rightarrow A\otimes A$ such that:
\begin{itemize}
\item $(A,m,\Delta)$ is a bialgebra. Its counit is denoted by $\varepsilon_\Delta$.
\item $(A,m,\delta)$ is a bialgebra. Its counit  is denoted by $\epsilon_\delta$.
\item The following compatibilities are satisfied:
\begin{align*}
(\varepsilon_\Delta\otimes \id)\circ \delta&=\eta\circ \varepsilon_\Delta,\\
(\Delta \otimes \id)\circ \delta&=m_{1,3,24}\circ (\delta \otimes \delta)\circ \Delta,
\end{align*}
where
\begin{align*}
\eta&:\left\{\begin{array}{rcl}
\K&\longrightarrow&A\\
x&\longmapsto&x1_A,
\end{array}\right.&
m_{1,3,24}&:\left\{\begin{array}{rcl}
A^{\otimes 4}&\longrightarrow&A^{\otimes 3}\\
a\otimes b\otimes c\otimes d&\longmapsto&a\otimes c\otimes bd.
\end{array}\right.
\end{align*}
\end{itemize}
\end{defi}

\begin{example}
The polynomial algebra $\K[X]$ is a double bialgebra, with the two multiplicative coproducts $\Delta$ and $\delta$, defined by
\begin{align*}
\Delta(X)&=X\otimes 1+1\otimes X,&\delta(X)&=X\otimes X. 
\end{align*}
The two counits are given by
\begin{align*}
&\forall P\in \K[X],&\varepsilon_\Delta(P)&=P(0),&\epsilon_\delta(P)&=P(1). 
\end{align*}
\end{example}

If $(A,m,\Delta,\delta)$ is a double bialgebra, 
its set of characters $\chara(A)$ inherits two convolution products, each making $\chara(A)$ a monoid:
\begin{align*}
&\forall \lambda,\mu\in \chara(A),&\lambda *\mu&=(\lambda \otimes \mu)\circ \Delta,
&\lambda \star\mu&=(\lambda \otimes \mu)\circ \delta.
\end{align*}
The compatibility between $\Delta$ and $\delta$ implies that
\begin{align*}
&\forall \lambda,\mu,\nu\in \chara(A),&(\lambda*\mu)\star \nu&=(\lambda \star \nu)*(\mu\star \nu).
\end{align*}

\begin{theo} \cite[Proposition 2.5]{Foissy40}
Let $(A,m,\Delta,\delta)$ be a double bialgebra and $(B,m,\Delta)$ be a bialgebra. We denote by $E_{A\rightarrow B}$
the set of bialgebra morphisms from $(A,m,\Delta)$ to $(B,m,\Delta)$.  The following map is an action of the monoid
$(\chara(A),\star)$ onto $E_{A\rightarrow B}$:
\begin{align*}
&\left\{\begin{array}{rcl}
E_{A\rightarrow B}\otimes \chara(A)&\longrightarrow&E_{A\rightarrow B}\\
(\phi,\lambda)&\longmapsto&\phi\leftsquigarrow \lambda=(\phi\otimes \lambda)\circ \delta.
\end{array}\right.
\end{align*}
\end{theo}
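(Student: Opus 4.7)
The plan is to verify, in turn, the three things that define an action of the monoid $(\chara(A),\star)$ on $E_{A\to B}$: (i) that for $\phi\in E_{A\to B}$ and $\lambda\in\chara(A)$, the map $\phi\leftsquigarrow\lambda$ is again a bialgebra morphism from $(A,m,\Delta)$ to $(B,m,\Delta)$; (ii) the associativity $\phi\leftsquigarrow(\lambda\star\mu)=(\phi\leftsquigarrow\lambda)\leftsquigarrow\mu$; and (iii) the neutrality of $\epsilon_\delta$.

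For (i), the multiplicativity of $\phi\leftsquigarrow\lambda=(\phi\otimes\lambda)\circ\delta$ is immediate from the fact that $\delta$, $\phi$, and $\lambda$ are all algebra morphisms, as is the preservation of $1_A$. The counit compatibility follows from the first double-bialgebra axiom: post-composing with $\varepsilon_B$ and using $\varepsilon_B\circ\phi=\varepsilon_\Delta$, the computation reduces to $(\varepsilon_\Delta\otimes\lambda)\circ\delta$, which by $(\varepsilon_\Delta\otimes\id)\circ\delta=\eta\circ\varepsilon_\Delta$ equals $\lambda(1_A)\,\varepsilon_\Delta=\varepsilon_\Delta$. The core step is to check that $\phi\leftsquigarrow\lambda$ is a coalgebra morphism for $\Delta$. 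Using that $\phi$ already commutes with $\Delta$, one rewrites
\begin{align*}
\Delta_B\circ(\phi\leftsquigarrow\lambda)&=(\phi\otimes\phi\otimes\lambda)\circ(\Delta\otimes\id)\circ\delta,\\
\bigl((\phi\leftsquigarrow\lambda)\otimes(\phi\leftsquigarrow\lambda)\bigr)\circ\Delta&=(\phi\otimes\phi\otimes\lambda)\circ m_{1,3,24}\circ(\delta\otimes\delta)\circ\Delta,
\end{align*}
where the second equality uses the multiplicativity of $\lambda$ to merge the two scalar slots into a single $\lambda\circ m$ (as built into $m_{1,3,24}$), and the canonical identification $B\otimes B\otimes\K\simeq B\otimes B$. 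Applying $\phi\otimes\phi\otimes\lambda$ to both sides of the second double-bialgebra axiom $(\Delta\otimes\id)\circ\delta=m_{1,3,24}\circ(\delta\otimes\delta)\circ\Delta$ then yields the desired equality. This is the only step that genuinely uses the interaction between the two coproducts.

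For (ii), both sides unfold to $(\phi\otimes\lambda\otimes\mu)\circ(\id\otimes\delta)\circ\delta$ and $(\phi\otimes\lambda\otimes\mu)\circ(\delta\otimes\id)\circ\delta$ respectively, which coincide by coassociativity of $\delta$. For (iii), $\phi\leftsquigarrow\epsilon_\delta=(\phi\otimes\epsilon_\delta)\circ\delta=\phi\circ(\id\otimes\epsilon_\delta)\circ\delta=\phi$ by the right counit axiom of $\delta$.

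The only genuine obstacle is the coproduct compatibility in step (i); every other verification is a formal manipulation with coassociativity, counits, and the algebra-morphism property of the ingredients.
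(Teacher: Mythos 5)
Your proof is correct. Note that the paper itself gives no proof of this statement---it is quoted verbatim from \cite[Proposition 2.5]{Foissy40}---so there is nothing to compare against; your verification (well-definedness via the two compatibility axioms and the character property of $\lambda$, associativity via coassociativity of $\delta$, neutrality of $\epsilon_\delta$ via the counit axiom) is the standard argument and correctly isolates the coproduct-compatibility check as the one step that uses the interaction $(\Delta\otimes\id)\circ\delta=m_{1,3,24}\circ(\delta\otimes\delta)\circ\Delta$.
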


\begin{notation}
Let $(A,m,\Delta)$ be a bialgebra. We denote by $A_+=\ker(\varepsilon_\Delta)$ its augmentation ideal.
We denote by $\tdelta:A_+\longrightarrow A_+\otimes A_+$ defined by
\begin{align*}
&\forall a\in A_+,&\tdelta(a)&=\Delta(a)-a\otimes 1_A-1_A\otimes a.
\end{align*}
It is coassociative, and we can consider its iterations $\tdelta^{(n)}:A_+\longrightarrow A_+^{\otimes (n+1)}$,
inductively defined by 
\begin{align*}
\tdelta^{(n)}&=\begin{cases}
\id_{A_+}\mbox{ if }n=0,\\
(\tdelta^{(n-1)}\otimes \id)\circ \tdelta\mbox{ if }n\geq 1.
\end{cases}
\end{align*}
We shall say that $(A,m,\Delta)$ is connected if 
\[A_+=\bigcup_{n\geq 0} \ker(\tdelta^{(n)}).\]
If $(A,m,\Delta,\delta)$ is a double bialgebra, we shall say that it is connected if $(A,m,\Delta)$ is connected.
\end{notation}

\begin{theo}\label{theoaajouter} \cite[Theorem 3.9 and Corollary 3.11]{Foissy40}
Let $(A,m,\Delta,\delta)$ be a connected double bialgebra. 
\begin{enumerate}
\item For any $\lambda\in \chara(A)$, there exists a unique $\phi_\lambda\in E_{A\rightarrow \K[X]}$ such that 
$\epsilon_\delta \circ \phi_\lambda=\lambda$. For any $a\in A_+$,
\[\phi_\lambda(a)=\sum_{n=1}^\infty \lambda^{\otimes n}\circ \tdelta^{(n-1)}(a) \frac{X(X-1)\ldots (X-n+1)}{n!}.\]
\item The morphism $\phi_{\epsilon_\delta}$ is the unique double bialgebra morphism from $(A,m,\Delta,\delta)$
to $(\K[X],\Delta,\delta)$.
\item The two following maps are bijections, inverse one from the other:
\begin{align*}
&\left\{\begin{array}{rcl}
E_{A\rightarrow \K[X]}&\longrightarrow&\chara(A)\\
\phi&\longmapsto&\epsilon_\delta \circ \phi.
\end{array}\right.&
&\left\{\begin{array}{rcl}
\chara(A)&\longrightarrow&E_{A\rightarrow \K[X]}\\
\lambda&\longrightarrow&\phi_\lambda=\phi_{\epsilon_\delta}\leftsquigarrow \lambda.
\end{array}\right.&
\end{align*}
\end{enumerate}
\end{theo}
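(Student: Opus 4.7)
The plan is to establish the existence-uniqueness statement 1 first, and then to deduce statements 2 and 3 from it. The crucial auxiliary observation is that for every $k\in\N$ the evaluation $\ev_k:\K[X]\to \K$ at $X=k$ is a character of $(\K[X],\Delta)$, and, since $X$ is primitive, one has $\ev_k=\ev_1^{*k}$ inside $(\chara(\K[X]),*)$. Consequently, pulling back along any $\phi\in E_{A\to \K[X]}$ yields
\[\ev_k\circ \phi=(\ev_1\circ \phi)^{*k}=(\epsilon_\delta\circ\phi)^{*k}\in \chara(A),\]
so $\phi$ is determined by the single character $\epsilon_\delta\circ \phi$ as soon as one knows its polynomial values at all nonnegative integers.

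For part 1, fix $\lambda\in\chara(A)$ and define $\phi_\lambda$ by the displayed formula; the sum is finite on each $a\in A_+$ by connectedness. The first step is to prove the binomial identity
\[\lambda^{*k}(a)=\sum_{n=1}^{k}\binom{k}{n}\,\lambda^{\otimes n}\circ \tdelta^{(n-1)}(a),\qquad a\in A_+,\ k\in \N,\]
by decomposing $\Delta^{(k-1)}(a)=\sum_{\emptyset\neq S\subseteq [k]}\iota_S\bigl(\tdelta^{(|S|-1)}(a)\bigr)$, where $\iota_S$ inserts tensors at positions in $S$ and puts $1_A$ elsewhere; applying $\lambda^{\otimes k}$ and using $\lambda(1_A)=1$ gives the identity, which translates into $\ev_k\circ \phi_\lambda=\lambda^{*k}$ for all $k\ge 0$. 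This in turn forces $\phi_\lambda$ to be a bialgebra morphism with respect to $\Delta$: multiplicativity holds because $(\ev_k\circ \phi_\lambda)(ab)=\lambda^{*k}(ab)=\lambda^{*k}(a)\lambda^{*k}(b)$ for every $k$, while $\Delta$-compatibility follows because both sides of $\Delta\circ \phi_\lambda=(\phi_\lambda\otimes \phi_\lambda)\circ \Delta$ give $\lambda^{*(k+l)}(a)$ after evaluation by $\ev_k\otimes \ev_l$. Specializing $k=1$ yields $\epsilon_\delta\circ \phi_\lambda=\lambda$. Uniqueness is immediate from the auxiliary observation: any $\phi'\in E_{A\to\K[X]}$ with $\epsilon_\delta\circ \phi'=\lambda$ satisfies $\ev_k\circ \phi'=\ev_k\circ \phi_\lambda$ for every $k$, so $\phi'(a)-\phi_\lambda(a)$ vanishes on $\N$ and is hence zero.

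For part 2, any double bialgebra morphism $\phi:A\to \K[X]$ preserves the counit $\epsilon_\delta$, so $\epsilon_\delta\circ \phi=\epsilon_\delta|_A$ and part 1 forces $\phi=\phi_{\epsilon_\delta}$. Conversely, to check that $\phi_{\epsilon_\delta}$ itself commutes with $\delta$, I would apply $\ev_k\otimes \ev_l$ to the two maps $\delta\circ \phi_{\epsilon_\delta}$ and $(\phi_{\epsilon_\delta}\otimes \phi_{\epsilon_\delta})\circ \delta$. Because $\delta(X)=X\otimes X$, one has $(\ev_k\otimes \ev_l)\circ\delta_{\K[X]}=\ev_{kl}$, so the two outputs become $\epsilon_\delta^{*kl}$ and $\epsilon_\delta^{*k}\star \epsilon_\delta^{*l}$ respectively. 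These agree by iterated use of the distributivity $(\alpha*\beta)\star\gamma=(\alpha\star\gamma)*(\beta\star\gamma)$ together with the fact that $\epsilon_\delta$ is the unit of $(\chara(A),\star)$, which gives $\epsilon_\delta\star\epsilon_\delta=\epsilon_\delta$; since a polynomial in two variables is determined by its values on $\N^2$, this yields the required identity. Part 3 then drops out: the map $\phi\mapsto \epsilon_\delta\circ\phi$ is a well-defined bijection $E_{A\to \K[X]}\to \chara(A)$ with inverse $\lambda\mapsto \phi_\lambda$, and the computation $\epsilon_\delta\circ(\phi_{\epsilon_\delta}\leftsquigarrow \lambda)=\epsilon_\delta\star \lambda=\lambda$ combined with the uniqueness of part 1 identifies this inverse with $\lambda\mapsto \phi_{\epsilon_\delta}\leftsquigarrow \lambda$. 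The main obstacle is the combinatorial binomial identity at the heart of part 1; once it is proved, everything else reduces to evaluating at integer points of $\K[X]$ and invoking the standard polynomial-identity principle.
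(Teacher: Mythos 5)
The paper itself gives no proof of this statement: it is quoted verbatim from \cite[Theorem 3.9 and Corollary 3.11]{Foissy40}, so there is no in-paper argument to compare yours against. Judged on its own merits, your proof is correct and is essentially the standard argument from that reference: the explicit formula in the binomial basis $\binom{X}{n}$, the identity $\lambda^{*k}(a)=\sum_{n=1}^{k}\binom{k}{n}\lambda^{\otimes n}\circ\tdelta^{(n-1)}(a)$ coming from the conilpotent decomposition of $\Delta^{(k-1)}$, and reduction of every morphism identity to an identity of polynomials checked on $\N$ or $\N^2$ via $\ev_k=\ev_1^{*k}$ and $(\ev_k\otimes\ev_l)\circ\Delta=\ev_{k+l}$, $(\ev_k\otimes\ev_l)\circ\delta=\ev_{kl}$. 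Two small points you should make explicit: (i) the decomposition $\Delta^{(k-1)}(a)=\sum_{\emptyset\neq S\subseteq[k]}\iota_S\bigl(\tdelta^{(|S|-1)}(a)\bigr)$ for $a\in A_+$, which you correctly identify as the crux, does require the connectedness hypothesis and a short induction on $k$ using coassociativity of $\tdelta$ --- it is not merely notation; and (ii) in part 2 the identity actually needed is $\epsilon_\delta\star\epsilon_\delta^{*l}=\epsilon_\delta^{*l}$ (left-unit property of $\epsilon_\delta$ for $\star$), which you then feed into the iterated distributivity $\epsilon_\delta^{*k}\star\gamma=(\epsilon_\delta\star\gamma)^{*k}$; the base case $k=0$ of that iteration uses the compatibility $(\varepsilon_\Delta\otimes\id)\circ\delta=\eta\circ\varepsilon_\Delta$ and deserves a line. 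Also note that in part 3 you need $\phi_{\epsilon_\delta}\leftsquigarrow\lambda\in E_{A\rightarrow\K[X]}$, which is exactly the action statement of the preceding theorem in the paper; with that citation your identification of the inverse map is complete.
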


\begin{theo}\cite[Corollary 2.3]{Foissy40} \label{theoantipode}
Let $(A,m,\Delta,\delta)$ be a double bialgebra, such that $(A,m,\Delta)$ is a Hopf algebra. 
We denote by $\epsilon_\delta^{*-1}$ the inverse of the character $\epsilon_\delta$ for the convolution associated to $\Delta$.
Then the antipode of $(A,m,\Delta)$ is given by
\[S=(\epsilon_\delta^{*-1}\otimes \id)\circ \delta.\]
If $\phi:(A,m,\Delta,\delta)\longrightarrow (\K[X],m,\Delta,\delta)$ is a double bialgebra morphism, then for any $x\in A$,
\[\epsilon_\delta^{*-1}(x)=\phi(x)(-1).\]
\end{theo}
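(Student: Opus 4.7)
The plan for the first identity is to exhibit a convolution-preserving map that sends $\epsilon_\delta^{*-1}$ to the antipode $S$. For each character $\lambda \in \chara(A)$, I set $f_\lambda := (\lambda \otimes \id) \circ \delta \in \Endo(A)$, and I claim that the assignment $\lambda \mapsto f_\lambda$ is a monoid morphism from $(\chara(A), *)$ into the convolution monoid $(\Endo(A), *)$ associated with $\Delta$, sending the unit $\varepsilon_\Delta$ of $(\chara(A),*)$ to the unit $\eta \circ \varepsilon_\Delta$ of $(\Endo(A),*)$, and sending $\epsilon_\delta$ to $\id$. Granting this, I apply the morphism to the identity $\epsilon_\delta * \epsilon_\delta^{*-1} = \varepsilon_\Delta$, obtaining $\id * f_{\epsilon_\delta^{*-1}} = \eta \circ \varepsilon_\Delta$. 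Since $(A,m,\Delta)$ is a Hopf algebra, the identity endomorphism has a unique two-sided convolution inverse, namely $S$, so $f_{\epsilon_\delta^{*-1}} = S$, which is the desired formula. Note that $\epsilon_\delta^{*-1}$ itself exists because, in a Hopf algebra, every character admits the convolution inverse $\lambda \circ S$.

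The computational heart of the argument, and the main obstacle, is verifying $f_{\lambda * \mu} = f_\lambda * f_\mu$. The strategy is to apply the functional $\lambda \otimes \mu \otimes \id$ to both sides of the second compatibility
\[
(\Delta \otimes \id) \circ \delta = m_{1,3,24} \circ (\delta \otimes \delta) \circ \Delta.
\]
The left-hand side becomes $((\lambda * \mu) \otimes \id) \circ \delta = f_{\lambda * \mu}$ directly from the definition of $*$. The right-hand side, after applying $m_{1,3,24}$ and recognising the multiplication of the two ``$\id$-outputs'', rewrites as $m \circ (f_\lambda \otimes f_\mu) \circ \Delta = f_\lambda * f_\mu$. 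The unit condition $f_{\varepsilon_\Delta} = \eta \circ \varepsilon_\Delta$ is exactly the first compatibility $(\varepsilon_\Delta \otimes \id) \circ \delta = \eta \circ \varepsilon_\Delta$, and $f_{\epsilon_\delta} = \id$ is the counit axiom for $\delta$. The only delicate point is the Sweedler-index bookkeeping when unfolding $m_{1,3,24}$; once that is written out carefully, the identities fall out mechanically.

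For the second identity, I would transport the computation to $\K[X]$ along $\phi$. Since $\phi$ is a $\delta$-bialgebra morphism, its composition with the $\delta$-counit of $\K[X]$ recovers $\epsilon_\delta$ on $A$; since it is a $\Delta$-bialgebra morphism, it intertwines antipodes, $\phi \circ S_A = S_{\K[X]} \circ \phi$. Combining these with the Hopf-algebraic identity $\epsilon_\delta^{*-1} = \epsilon_\delta \circ S_A$ gives
\[
\epsilon_\delta^{*-1}(x) = \epsilon_\delta\bigl(S_{\K[X]}(\phi(x))\bigr).
\]
A direct check on the generator $X$ shows $S_{\K[X]}(P)(Y) = P(-Y)$, and evaluating at $1$ (which is the definition of the $\delta$-counit on $\K[X]$) produces $\phi(x)(-1)$, completing the proof.
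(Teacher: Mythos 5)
Your proof is correct. The paper does not prove this statement itself (it is quoted from \cite[Corollary 2.3]{Foissy40}), but your argument is essentially the standard one behind that reference: the compatibility $(\Delta\otimes\id)\circ\delta=m_{1,3,24}\circ(\delta\otimes\delta)\circ\Delta$ makes $\lambda\longmapsto(\lambda\otimes\id)\circ\delta$ a unital morphism from $(\chara(A),*)$ to the convolution monoid of $\Endo(A)$ sending $\epsilon_\delta$ to $\id$, hence $\epsilon_\delta^{*-1}$ to the unique convolution inverse $S$ of $\id$; and the evaluation $\epsilon_\delta^{*-1}=\epsilon_\delta\circ S$ transported along $\phi$, together with $S_{\K[X]}(P)=P(-X)$ and $\epsilon_\delta(P)=P(1)$, gives $\phi(x)(-1)$.
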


\section{Bialgebraic structure on graphs} \label{sec:RemHopfGraphs}

\subsection{Products and coproducts}

We refer to  \cite{Bollobas1998}  and \cite{Harary1969} for classical definitions and notations on graphs.
A (simple) graph is a pair  $G=(V(G), E(G))$, where $V(G)$ is a set, whose elements are called the vertices of the graph, and  $E(G)\subseteq \binom{V(G)}{2}$ is a subset of unordered pairs of vertices, called the edges of $G$. 
We shall denote by $\bfG$ the set of (isoclasses of) graphs. The vector space generated by $\bfG$ will be denoted by $\calH_\bfG$. 

\begin{example} 
\begin{align*}
\bfG&=\left\{\begin{array}{c}
1,\grun,\grdeux,\grun\grun,\grtroisun,\grtroisdeux,\grdeux\grun,\grun\grun\grun,\\
\grquatreun,\grquatredeux,\grquatretrois,\grquatrequatre,\grquatrecinq,\grquatresix,
\grtroisun\grun,\grtroisdeux\grun,\grdeux\grdeux,\grdeux\grun\grun,\grun\grun\grun\grun,\ldots
\end{array}\right\}.
\end{align*}
\end{example}

If $G$ and $H$ are two graphs, their disjoint union is the graph $GH$ defined by 
\begin{align*}
V(GH)&=V(G)\sqcup V(H), &E(GH)&=E(G)\sqcup E(H).
\end{align*}
This yields a commutative and associative product $m$ on $\calH_\bfG$, whose unit is the empty graph $1$.\\

\noindent Let $G$ be a graph and $I\subseteq V(G)$. The subgraph induced in $G$ by $I$, denoted here by  $G_{\mid I}$, is defined by
\begin{align*}
V(G_{\mid I})&=I,&E(G_{\mid I})&=\{\{x,y\}\in E(G)\mid x,y\in I\}.
\end{align*}
Using the notion of induces subgraph,  any bipartition $(I, V(G)\setminus I)$ of vertices ``splits'' the graph into two pieces, $G_{\mid I}$ and  
$G_{\mid V(G)\setminus I}$: this in turn allows to define a cocommutative and coassociative coproduct $\Delta$ on $\calH_\bfG$ (see Schmitt \cite{Schmitt1994}) given by
\begin{align*}
&\forall G\in\bfG,&\Delta(G)&=\sum_{I\subseteq V(G)} G_{\mid I}\otimes G_{\mid V(G)\setminus I}.
\end{align*}
Its counit $\varepsilon_\Delta$ is given by
\begin{align*}
&\forall G\in\bfG,&\varepsilon_\Delta(G)&=\delta_{G,1}.
\end{align*}

\begin{example}
\begin{align*}
\Delta(\grun)&=\grun \otimes 1+1\otimes \grun,\\
\Delta(\grdeux)&=\grdeux\otimes 1+1\otimes \grdeux+2\grun \otimes \grun,\\
\Delta(\grtroisun)&=\grtroisun\otimes 1+1\otimes \grtroisun+3\grun \otimes \grdeux+3\grdeux\otimes \grun,\\
\Delta(\grtroisdeux)&=\grtroisdeux\otimes 1+1\otimes\grtroisdeux+2\grun \otimes \grdeux+
\grun\otimes \grun\grun+2\grdeux\otimes \grun+\grun\grun\otimes \grun,\\
\Delta(\grquatreun)&=\grquatreun\otimes 1+1\otimes\grquatreun
+4\grun \otimes \grtroisun+6\grdeux\otimes \grdeux+4\grtroisun\otimes \grun,\\
\Delta(\grquatredeux)&=\grquatredeux\otimes 1+1\otimes\grquatredeux
+2\grun \otimes \grtroisun+2\grun\otimes \grtroisdeux\\
&+4\grdeux\otimes \grdeux+\grdeux\otimes \grun\grun
+\grun\grun\otimes \grdeux+2\grtroisun\otimes \grun+2\grtroisdeux\otimes \grun,\\
\Delta(\grquatretrois)&=\grquatretrois\otimes 1+1\otimes\grquatretrois
+\grun \otimes \grtroisun+2\grun \otimes \grtroisdeux+\grun \otimes \grdeux\grun\\
&+2\grdeux\otimes \grdeux+2\grun\grun\otimes \grdeux+2\grdeux\otimes \grun\grun
+\grtroisun\otimes \grun+2\grtroisdeux\otimes \grun+\grdeux\grun\otimes \grun,\\
\Delta(\grquatrequatre)&=\grquatrequatre\otimes 1+1\otimes\grquatrequatre
+4\grun \otimes \grtroisdeux+4\grdeux\otimes \grdeux+2\grun\grun\otimes \grun\grun
+4\grtroisdeux\otimes \grun,\\
\Delta(\grquatrecinq)&=\grquatrecinq\otimes 1+1\otimes\grquatrecinq+3\grun \otimes \grtroisdeux
+3\grun\otimes \grun\grun\grun+3\grdeux\otimes \grun\grun+3\grun\grun\otimes \grdeux
+\grtroisdeux\otimes \grun+\grun\otimes \grun\grun\grun,\\
\Delta(\grquatresix)&=\grquatresix\otimes 1+1\otimes\grquatresix
+2\grun\otimes \grtroisdeux+2\grun\otimes\grdeux\grun\\
&+2\grdeux\otimes\grdeux+2\grun\grun\otimes\grun\grun+\grun\grun\otimes\grdeux
+\grdeux\otimes\grun\grun+2\grtroisdeux\otimes \grun+2\grdeux\grun\otimes \grun.
\end{align*}
\end{example}

Let $G$ be a graph and let  $\eq(G)$ the set of equivalence relations on the vertices $V(G)$.  For  $\sim\ \in \eq(G)$, denote by $\pi_\sim:V(G)\longrightarrow V(G)/ \sim$ is the canonical surjection. We define the {\it contracted graph $G/\sim$} by
\begin{align*}
V(G/\sim)&=V(G)/\sim,&E(G/\sim)&=\{\{\pi_\sim(x),\pi_\sim(y)\}\mid \{x,y\}\in E(G), \:\pi_\sim(x)\neq \pi_\sim(y)\}.
\end{align*}

We define the {\it restricted graph} $G\mid\sim$ by
\begin{align*}
V(G\mid \sim)&=V(G),&E(G\mid \sim)=\{\{x,y\}\in E(G)\mid \pi_\sim(x)=\pi_\sim(y)\}.
\end{align*}
In other words, $G\mid \sim$ is the disjoint union of the subgraphs $G_{\mid C}$, with $C\in V(G)/\sim$.
We shall say that an equivalence  $\sim$ on $V(G)$ is in $\eq_c(G)$ if for any equivalence class $C\in V(G)/\sim$ the graph $G_{\mid C}$ induced by $C$ is a connected graph. So, for any equivalence on $V(G)$ under the  assumption of taking classes being connected, contraction and restriction give two graphs associated to $G$. 
\begin{remark}
For an edge $e=\{x,y\}\in E(G)$, let $\sim_e$ be the equivalence whose classes are $$V(G/\sim_e)=\{x,y\}\bigcup_{z\neq x,y} \{z\}.$$ Then $\sim_e \in \eq_c(G)$: for simplicity, we will denote by $G/e$ the contracted graph $G/\sim_e$.
\end{remark}

We thus define a second coproduct $\delta$ on $\calH_\bfG$ by
\begin{align*}
&\forall G\in\bfG,&\delta(G)&=\sum_{\sim\ \in \eq_c(G)} G/\sim\otimes \ G\mid \sim.
\end{align*}
This coproduct is coassociative, but not cocommutative. Its counit $\epsilon_\delta$ is given by
\begin{align*}
&\forall G\in\bfG,&\epsilon_\delta(G)&=\begin{cases}
1\mbox{ if }E(G)=\emptyset,\\
0\mbox{ otherwise}.
\end{cases}
\end{align*}

\begin{example}
\begin{align*}
\delta(\grun)&=\grun \otimes\grun,\\
\delta(\grdeux)&=\grdeux\otimes \grun\grun+ \grun\otimes \grdeux,\\
\delta(\grtroisun)&=\grtroisun\otimes \grun\grun\grun+ \grun\otimes \grtroisun
+3\grdeux\otimes \grdeux\grun,\\
\delta(\grtroisdeux)&=\grtroisdeux\otimes\grun\grun\grun+2\grdeux\otimes \grdeux\grun,\\
\delta(\grquatreun)&=\grquatreun\otimes\grun\grun\grun\grun+ \grun\otimes\grquatreun
+6\grtroisun \otimes \grdeux\grun\grun+\grdeux\otimes (6\grdeux\grdeux+4\grtroisun\grun),\\
\delta(\grquatredeux)&=\grquatredeux\otimes\grun\grun\grun\grun+ \grun\otimes\grquatredeux
+(4\grtroisun+\grtroisdeux)\otimes \grdeux\grun\grun
+\grdeux\otimes (2\grdeux\grdeux+2\grun\grtroisun+2\grun\grtroisdeux),\\
\delta(\grquatretrois)&=\grquatretrois\otimes\grun\grun\grun\grun+ \grun\otimes\grquatretrois
+(\grtroisun+3\grtroisdeux)\otimes \grdeux\grun\grun
+\grdeux\otimes (\grdeux\grdeux+\grun\grtroisun+2\grun\grtroisdeux),\\
\delta(\grquatrequatre)&=\grquatrequatre\otimes\grun\grun\grun\grun+ \grun\otimes\grquatrequatre
+4\grtroisun\otimes \grdeux\grun\grun +\grdeux\otimes (2\grdeux\grdeux+4\grun\grtroisun),\\
\delta(\grquatrecinq)&=\grquatrecinq\otimes\grun\grun\grun\grun+ \grun\otimes\grquatrecinq
+3\grtroisdeux\otimes \grdeux\grun\grun+3\grdeux\otimes \grun\grtroisdeux,\\
\delta(\grquatresix)&=\grquatresix\otimes\grun\grun\grun\grun+ \grun\otimes\grquatresix
+3\grtroisdeux\otimes \grdeux\grun\grun +\grdeux\otimes (\grdeux\grdeux+2\grun\grtroisdeux).
\end{align*}
\end{example}

\begin{prop}  \cite[Theorem 1.7]{Foissy36}
$(\calH_\bfG,m,\Delta,\delta)$ is a double bialgebra. 
\end{prop}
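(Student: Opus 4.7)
The plan is to verify in turn that each of the three structures exists, and to check the two compatibilities explicitly using combinatorial bijections on the indexing sets.

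First I would argue that $(\calH_\bfG,m,\Delta)$ is a bialgebra in the standard way of Schmitt: for $G,H\in \bfG$, subsets $I\subseteq V(GH)=V(G)\sqcup V(H)$ are in bijection with pairs $(I_1,I_2)$ with $I_k\subseteq V(\cdot)$, and $(GH)_{\mid I}=G_{\mid I_1}H_{\mid I_2}$, yielding $\Delta(GH)=\Delta(G)\Delta(H)$. Coassociativity reduces to associativity of ``ordered tripartition of $V(G)$'', and the counit $\varepsilon_\Delta$ is obvious. Next, for $(\calH_\bfG,m,\delta)$, the key observation is that an equivalence $\sim\ \in\eq_c(GH)$ cannot have a class meeting both $V(G)$ and $V(H)$ (such a class could not be connected in $GH$), so $\eq_c(GH)\simeq\eq_c(G)\times\eq_c(H)$, and contractions and restrictions split accordingly, giving $\delta(GH)=\delta(G)\delta(H)$. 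Coassociativity follows from the natural identification of pairs $(\sim,\sim')$, where $\sim\ \in\eq_c(G)$ and $\sim'\in\eq_c(G/\sim)$, with the pairs $(\approx,\sim)$, where $\approx\ \in\eq_c(G)$ is the coarsening whose classes are the unions of $\sim$-classes lying in a common $\sim'$-class and $\sim$ refines $\approx$ on each block. The counit $\epsilon_\delta$ is checked against this formula.

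The first compatibility $(\varepsilon_\Delta\otimes\id)\circ\delta=\eta\circ\varepsilon_\Delta$ is immediate: applying $\varepsilon_\Delta$ to $G/\sim$ gives $0$ unless $V(G/\sim)=\emptyset$, which forces $V(G)=\emptyset$, hence $G=1$.

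The main step is the second compatibility. I would prove it by exhibiting a bijection between the indexing sets of the two sides, evaluated on a graph $G$:
\begin{align*}
\text{LHS: }& \bigl\{(\sim,J)\ :\ \sim\ \in\eq_c(G),\ J\subseteq V(G)/\sim\bigr\},\\
\text{RHS: }& \bigl\{(I,\sim_1,\sim_2)\ :\ I\subseteq V(G),\ \sim_1\in\eq_c(G_{\mid I}),\ \sim_2\in\eq_c(G_{\mid V(G)\setminus I})\bigr\}.
\end{align*}
From $(\sim,J)$, set $I=\bigsqcup_{C\in J}C$ and let $\sim_1,\sim_2$ be the restrictions of $\sim$ to $I$ and $V(G)\setminus I$; conversely, from $(I,\sim_1,\sim_2)$, let $\sim$ be the equivalence whose classes are those of $\sim_1$ together with those of $\sim_2$, and let $J$ be the set of $\sim_1$-classes. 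Connectedness is preserved because every class is fully contained in $I$ or in $V(G)\setminus I$, and a subset $C$ of $I$ induces the same subgraph in $G$ as in $G_{\mid I}$. Under this bijection one then checks the four tensor factors match:
\begin{align*}
(G/\sim)_{\mid J}&=(G_{\mid I})/\sim_1,\\
(G/\sim)_{\mid V(G/\sim)\setminus J}&=(G_{\mid V(G)\setminus I})/\sim_2,\\
G\mid\sim\ &=\bigl(G_{\mid I}\mid\sim_1\bigr)\bigl(G_{\mid V(G)\setminus I}\mid\sim_2\bigr),
\end{align*}
since each edge of $G\mid\sim$, having both endpoints in the same $\sim$-class, is either entirely in $I$ or entirely in $V(G)\setminus I$, and similarly for edges in the contraction quotients. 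These identities exactly match the action of $m_{1,3,24}$ on $(\delta\otimes\delta)\circ\Delta(G)$.

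The only real obstacle is the bookkeeping in this bijection; once the correct parametrization is set up, every verification is a routine check of vertex sets and edge sets. There are no subtleties of signs or multiplicities, since graph isomorphism classes are all counted with coefficient $1$ on both sides.
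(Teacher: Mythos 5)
Your verification is correct, and the key bijections you set up (splitting $\eq_c(GH)$ over a disjoint union, the coarsening correspondence for coassociativity of $\delta$, and the correspondence $(\sim,J)\leftrightarrow(I,\sim_1,\sim_2)$ for the compatibility $(\Delta\otimes\id)\circ\delta=m_{1,3,24}\circ(\delta\otimes\delta)\circ\Delta$) are exactly the ones needed. The paper itself gives no proof here — it simply cites \cite[Theorem 1.7]{Foissy36} — and your argument is essentially the standard verification carried out in that reference.
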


As $(\calH_\bfG,m,\Delta)$ is a graded and connected bialgebra, it is a Hopf algebra. Its antipode is denoted by $S$.
The invertible characters for the convolution $\star$ induced by $\delta$ are given by the following:

\begin{lemma}\cite[Theorem 2.1]{Foissy36}\label{lemmeinversibles}
Let $\lambda \in \chara(\calH_\bfG)$. It is invertible for the product $\star$ if, and only if, $\lambda(\tun)\neq 0$. 
\end{lemma}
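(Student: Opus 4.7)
The plan is to prove the two implications separately. The direct implication is an immediate one-line evaluation, while the converse will require an inductive construction of the inverse character, followed by a standard monoid argument to upgrade the left inverse to a two-sided one.

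For the direction $(\Rightarrow)$, suppose $\mu \in \chara(\calH_\bfG)$ satisfies $\lambda \star \mu = \epsilon_\delta$. Evaluating this equality on $\tun$ and using $\delta(\tun) = \tun \otimes \tun$ together with $\epsilon_\delta(\tun) = 1$, I immediately get $\lambda(\tun)\mu(\tun) = 1$, so $\lambda(\tun) \neq 0$.

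For the direction $(\Leftarrow)$, set $c := \lambda(\tun) \neq 0$. Since every graph is uniquely a disjoint union of its connected components, $(\calH_\bfG, m)$ is the polynomial algebra freely generated by the connected graphs, so a character on $\calH_\bfG$ is determined by its values on them. I would construct a character $\mu$ with $\mu \star \lambda = \epsilon_\delta$ by induction on $n := |V(G)|$ for $G$ connected, starting from $\mu(\tun) = c^{-1}$. For a connected $G$ with $n \geq 2$, in the sum $\delta(G) = \sum_{\sim \in \eq_c(G)} G/\sim \otimes G|\sim$ the discrete equivalence contributes exactly $G \otimes \tun^n$, and for every other $\sim \in \eq_c(G)$ the quotient $G/\sim$ is again connected (a path in $G$ between two vertices descends to a walk between their classes in $G/\sim$) and has strictly fewer than $n$ vertices, so $\mu(G/\sim)$ is already determined. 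The equation $(\mu \star \lambda)(G) = \epsilon_\delta(G)$ then reads
$$
c^n\,\mu(G) \;=\; \epsilon_\delta(G) \;-\; \sum_{\sim \in \eq_c(G),\ \sim \text{ non-discrete}} \mu(G/\sim)\,\lambda(G|\sim),
$$
which uniquely determines $\mu(G)$ since $c^n \neq 0$. After extending $\mu$ multiplicatively to all of $\calH_\bfG$, both $\mu \star \lambda$ (a character because $\delta$ is an algebra morphism) and $\epsilon_\delta$ are characters agreeing on every connected graph, hence they coincide everywhere.

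To upgrade this left inverse to a two-sided one, I observe that $\mu(\tun) = c^{-1} \neq 0$, so the same construction applied to $\mu$ yields a character $\nu$ with $\nu \star \mu = \epsilon_\delta$; the standard associativity argument
$$
\nu = \nu \star \epsilon_\delta = \nu \star (\mu \star \lambda) = (\nu \star \mu) \star \lambda = \epsilon_\delta \star \lambda = \lambda
$$
then gives $\lambda \star \mu = \nu \star \mu = \epsilon_\delta$. The main point at which care is required is the inductive step: checking that the discrete equivalence always belongs to $\eq_c(G)$ (singletons being connected subgraphs), that $G/\sim$ remains connected so the inductive hypothesis applies to it, and that the unknown $\mu(G)$ carries the invertible coefficient $c^n$. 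Once this is in place, the multiplicativity of $\mu$ and the passage to a two-sided inverse are purely formal.
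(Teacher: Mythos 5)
Your proof is correct and complete: the evaluation at $\tun$ for the forward direction, the triangular induction on connected graphs using the discrete equivalence as the leading term (with invertible coefficient $\lambda(\tun)^{n}$) for the converse, and the monoid argument promoting the one-sided inverse are all sound. The paper itself gives no proof, citing \cite[Theorem 2.1]{Foissy36}; your argument is essentially the standard one used there, so nothing further is needed.
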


For a set $X$  (the set of ``colors''), recall that a {\it proper $X$--coloring}  of a graph $G$ is an assignment of an element of  $X$  to each vertex  $f:V(G) \rightarrow X$ such that adjacent vertices are assigned different colors. The chromatic number $\chi(G)$ of $G$ is the minimum number of colors in a proper vertex coloring of $G$: 
\[\chi(G)= \min \{|f(V(G))|: f \mbox{ is a proper coloring of } G\}.\]
 For any $k\in \N$, we call {\it $k$--proper coloring} of $G$ any proper coloring using at most $k$ colors and denote by  
 $\phi_{chr}(G,k)$ the number of proper $k$-colorations of $G$. If $|V(G)| = 0$,  one takes this number to be  $1$. It is a well--known result that  $\phi_{chr}(G, k)$ is a polynomial in $k$ with integer coefficients:  it can be extended to a unique polynomial  $\phi_{chr}(G)=\phi_{chr}(G,x)\in K[x]$, called is the {\it chromatic polynomial},  an important invariant of graph theory. The problem of  coloring a graph can be reduced to the problem of coloring two graphs derived from $G$: for $e=\{a,b\}\in E(G)$, if  $G -  e $ is the graph obtained by $G$ removing the edge $e$, and $G/e$ is the graph obtained by $G$ by contraction of $e$, then the following holds: 
$$\phi_{chr}(G)=\phi_{chr}(G -  e)-\phi_{chr}(G/e).$$

\begin{prop}

\begin{enumerate}
\item \cite[Theorem 2.4]{Foissy36} The unique double bialgebra morphism from the double algebra of graphs 
$(\calH_\bfG,m,\Delta,\delta)$ to $(\K[X],m,\Delta,\delta)$ (by virtue of Theorem \ref{theoaajouter}) is the chromatic polynomial
$\phi_{chr}$.%, where  for any $G\in \calH_\bfG$,  $\phi_{chr}(G)$.
\item \cite[Proposition 2.2]{Foissy36} The following morphism is a Hopf algebra morphism from 
$(\calH,\bfG,m,\Delta)$ to $(\K[X],m,\Delta)$:
\[\phi_0=\left\{\begin{array}{rcl}
\bfH_\bfG&\longrightarrow&\K[X]\\
G\in \bfG&\longrightarrow&X^{|V(G)|}.
\end{array}\right.\]
\end{enumerate}\end{prop}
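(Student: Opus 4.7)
The plan is to treat the two assertions separately and, for the first one, to reduce to Theorem \ref{theoaajouter}. That theorem singles out the unique double bialgebra morphism $\phi_{\epsilon_\delta}:(\calH_\bfG,m,\Delta,\delta)\longrightarrow(\K[X],m,\Delta,\delta)$ as the unique $\Delta$-bialgebra morphism $\phi$ satisfying $\epsilon_\delta\circ\phi=\epsilon_\delta$. So it suffices to check that (i) $\phi_{chr}$ is a bialgebra morphism from $(\calH_\bfG,m,\Delta)$ to $(\K[X],m,\Delta)$, and (ii) $\phi_{chr}(G)(1)=\epsilon_\delta(G)$ for every $G\in\bfG$.

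For (i), multiplicativity is immediate, since a proper coloring of the disjoint union $GH$ is exactly a pair of proper colorings of $G$ and $H$, and the empty graph admits the unique empty coloring. Compatibility with $\Delta$ amounts to the identity
\[
\phi_{chr}(G)(x+y)=\sum_{I\subseteq V(G)}\phi_{chr}(G_{\mid I})(x)\,\phi_{chr}(G_{\mid V(G)\setminus I})(y),
\]
which, being a polynomial identity, needs to be verified only for $x,y\in\N$: a proper coloring of $G$ with colors in $[x+y]=[x]\sqcup\{x+1,\ldots,x+y\}$ is the same as the choice of the subset $I$ of vertices receiving a color in $[x]$, together with proper colorings of the induced subgraphs $G_{\mid I}$ and $G_{\mid V(G)\setminus I}$ on the two color sets. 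Compatibility with the counit $\varepsilon_\Delta$ reduces to the observation that $\phi_{chr}(G)(0)=0$ as soon as $V(G)\neq\emptyset$. Finally, (ii) is almost a tautology: with a single color the only candidate coloring is constant, which is proper precisely when $E(G)=\emptyset$, so $\phi_{chr}(G)(1)=\mathbf{1}_{E(G)=\emptyset}=\epsilon_\delta(G)$. Theorem \ref{theoaajouter} then identifies $\phi_{chr}$ with $\phi_{\epsilon_\delta}$, giving both existence and uniqueness.

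The second assertion is a direct verification. Multiplicativity of $\phi_0$ follows from $|V(GH)|=|V(G)|+|V(H)|$, unitality from $|V(1)|=0$, and counit compatibility from $X^{|V(G)|}(0)=\delta_{|V(G)|,0}=\varepsilon_\Delta(G)$. Comultiplicativity reduces to the binomial identity
\[
\Delta\bigl(X^{|V(G)|}\bigr)=\sum_{k=0}^{|V(G)|}\binom{|V(G)|}{k}X^k\otimes X^{|V(G)|-k}=\sum_{I\subseteq V(G)}X^{|I|}\otimes X^{|V(G)\setminus I|}=(\phi_0\otimes\phi_0)\circ\Delta(G).
\]
Since both $(\calH_\bfG,m,\Delta)$ and $(\K[X],m,\Delta)$ are connected graded bialgebras, hence Hopf algebras, any bialgebra morphism between them automatically commutes with the antipodes, so $\phi_0$ is a Hopf algebra morphism.

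The only step that might look delicate is the direct verification that $\phi_{chr}$ respects the second coproduct $\delta$, which would amount to the identity $\phi_{chr}(G)(xy)=\sum_{\sim\,\in\eq_c(G)}\phi_{chr}(G/\!\sim)(x)\,\phi_{chr}(G\mid\sim)(y)$, established bijectively by attaching to a proper $[x]\times[y]$-coloring $f=(f_1,f_2)$ the equivalence $\sim_f$ whose classes are the connected components of the fibers of $f_1$ in $G$. Going through Theorem \ref{theoaajouter} instead bypasses this verification, which is why I reduce the proof to the $\Delta$-morphism property and the character identity $\epsilon_\delta\circ\phi_{chr}=\epsilon_\delta$.
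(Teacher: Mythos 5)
Your proof is correct. The paper itself gives no argument for this proposition --- it simply cites \cite[Theorem 2.4 and Proposition 2.2]{Foissy36} --- but the route you take is exactly the one the paper's framework points to: verify that $\phi_{chr}$ is a morphism for $(m,\Delta)$ and that $\epsilon_\delta\circ\phi_{chr}=\epsilon_\delta$, then invoke the bijection of Theorem \ref{theoaajouter}(3) together with part (2) to identify $\phi_{chr}$ with $\phi_{\epsilon_\delta}$, the unique double bialgebra morphism; the direct check of $\phi_0$ for the second part is likewise the standard binomial computation. The only (implicit) hypothesis worth flagging is the connectedness of $(\calH_\bfG,m,\Delta)$ required by Theorem \ref{theoaajouter}, which the paper records just after introducing the double bialgebra of graphs, so nothing is missing.
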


\subsection{Double bialgebra of oriented graphs}

An oriented graph is a pair $G=(V(G),A(G))$, where $V(G)$ is a finite set, called the set of vertices of $G$,
and $A(G)$ a set of ordered pairs of distinct elements of $V(G)$, i.e. 
$$A(G)\subseteq V(G)\times V(G)\setminus \{(x,x) | x\in V(G)\}.$$ The elements of $A(G)$ are called the arcs of $G$.
The set of (isoclasses of) oriented graphs is denoted by $\bfG_o$. (Note that we are not considering loops on oriented graphs, that is to say arcs which both extremities are equal).

Let us recall the double bialgebra structure on oriented graphs of \cite{Manchon2012}.
As a vector space, $\calH_{\bfG_o}$ is the vector space generated by $\bfG_o$. 
If $G,G'\in \bfG_o$, their product is the graph $GG'$ defined by
\begin{align*}
V(GG')&=V(G)\sqcup V(G'), &A(GG')&=A(G)\sqcup A(G').
\end{align*}
This induces a commutative and associative product $m$ on $\calH_{\bfG_o}$, which units is the empty graph $1$.\\

Let $G$ be an oriented graph and $I\subseteq V(G)$. The oriented subgraph $G_{\mid I}$ is defined by
\begin{align*}
V(G_{\mid I})&=I,&A(G_{\mid I})&=\{(x,y)\in A(G)\mid x,y\in I\}.
\end{align*}
We shall say that $I$ is an ideal of $G$ if for any $x,y\in V(G)$,
\[x\in I \mbox{ and }(x,y)\in A(G)\Longrightarrow y\in I.\]
These notions induce a commutative and coassociative coproduct $\Delta$ on $\calH_{\bfG_o}$ given by
\begin{align*}
&\forall G\in\bfG_o,&\Delta(G)&=\sum_{\mbox{\scriptsize $I$ ideal of $G$}}=G_{\mid I}\otimes G_{\mid V(G)\setminus I}.
\end{align*}
Its counit $\varepsilon_\Delta$ is given by
\begin{align*}
&\forall G\in\bfG_o,&\varepsilon_\Delta(G)&=\delta_{G,1}.
\end{align*}

Let $G$ be an oriented graph and let $\sim$ be an equivalence relation on $V(G)$. We define the contracted oriented graph $G/\sim$ by
\begin{align*}
V(G/\sim)&=V(G)/\sim,&A(G/\sim)&=\{(\pi_\sim(x),\pi_\sim(y))\mid (x,y)\in A(G), \:\pi_\sim(x)\neq \pi_\sim(y)\},
\end{align*}
where $\pi_\sim:V(G)\longrightarrow V(G)/\sim$ is the canonical surjection. 
We define the restricted oriented graph $G\mid\sim$ by
\begin{align*}
V(G\mid \sim)&=V(G),&A(G\mid \sim)=\{(x,y)\in A(G)\mid \pi_\sim(x)=\pi_\sim(y)\}.
\end{align*}
In other words, $G\mid \sim$ is the disjoint union of the oriented subgraphs $G_{\mid \pi}$, with $\pi\in V(G)/\sim$.
We shall say that $\sim\eq_c(G)$ if for any class $C\in V(G)/\sim$, $G_{\mid C}$ is a connected graph.
We then define a second coproduct $\delta$ on $\calH_{\bfG_o}$ by
\begin{align*}
&\forall G\in\bfG_o,&\delta(G)&=\sum_{\sim\in \eq_c(G)} G/\sim\otimes G\mid \sim.
\end{align*}
This coproduct is coassociative, but not cocommutative. Its counit $\epsilon_\delta$ is given by
\begin{align*}
&\forall G\in\bfG_o,&\epsilon_\delta(G)&=\begin{cases}
1\mbox{ if }E(G)=\emptyset,\\
0\mbox{ otherwise}.
\end{cases}
\end{align*}

\begin{theo}\cite{Manchon2012}
$(\calH_{\bfG_o},m,\Delta,\delta)$ is a double bialgebra. 
\end{theo}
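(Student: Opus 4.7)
The strategy mirrors the proof of the analogous statement for unoriented graphs \cite[Theorem~1.7]{Foissy36}, adapting the combinatorial arguments to take orientations into account. I would split the verification into four steps: the bialgebra axioms for $(\calH_{\bfG_o},m,\Delta)$, those for $(\calH_{\bfG_o},m,\delta)$, the counital compatibility $(\varepsilon_\Delta\otimes\id)\circ\delta=\eta\circ\varepsilon_\Delta$, and the main coproduct compatibility. All products and coproducts being defined on the basis $\bfG_o$ by sums indexed by combinatorial data, each identity reduces to establishing a bijection of indexing sets and checking that the corresponding tensors agree.

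For the bialgebra $(\calH_{\bfG_o},m,\Delta)$, multiplicativity of $\Delta$ with respect to disjoint union is straightforward: an ideal of $GG'$ is precisely a pair $(I,I')$ with $I$ an ideal of $G$ and $I'$ an ideal of $G'$, since there are no arcs between $G$ and $G'$. Coassociativity follows from the easy observation that if $I$ is an ideal of $G$ and $J$ is an ideal of $G_{\mid V(G)\setminus I}$, then $I\sqcup J$ is an ideal of $G$, yielding the obvious bijection between pairs $(I,J)$ and nested ideals. The counit $\varepsilon_\Delta$ picks out the empty graph, giving the counit axiom immediately. For $(\calH_{\bfG_o},m,\delta)$, multiplicativity again follows because an equivalence in $\eq_c(GG')$ decomposes as a pair of equivalences in $\eq_c(G)\times\eq_c(G')$, and contraction and restriction commute with disjoint union. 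Coassociativity of $\delta$ relies on the identification of $\eq_c(G/\sim)$ with the set of coarsenings of $\sim$ that still lie in $\eq_c(G)$: giving a pair $(\sim_1,\sim_2)$ with $\sim_1\in\eq_c(G)$ and $\sim_2\in\eq_c(G/\sim_1)$ is the same as giving a single $\sim\in\eq_c(G)$ together with a refinement $\sim_1$ of $\sim$ in $\eq_c(G)$. The counit axiom for $\epsilon_\delta$ is checked by noting that the only $\sim$ for which $G/\sim$ has no arcs is the trivial relation equating all vertices of each connected component, and the only $\sim$ for which $G\mid\sim$ has no arcs is the discrete relation.

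The first compatibility, $(\varepsilon_\Delta\otimes\id)\circ\delta=\eta\circ\varepsilon_\Delta$, is immediate: applied to $G\in\bfG_o$, the left-hand side keeps only the term of $\delta(G)$ for which $G/\sim=1$, which forces $V(G)=\emptyset$ and therefore $G=1$; in that case both sides return $1\otimes 1$.

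The heart of the proof is the second compatibility
\[(\Delta\otimes\id)\circ\delta=m_{1,3,24}\circ(\delta\otimes\delta)\circ\Delta.\]
Expanding both sides on a graph $G$, the left-hand side is indexed by pairs $(\sim,J)$ where $\sim\in\eq_c(G)$ and $J$ is an ideal of $G/\sim$, whereas the right-hand side is indexed by triples $(I,\sim_1,\sim_2)$ with $I$ an ideal of $G$, $\sim_1\in\eq_c(G_{\mid I})$, and $\sim_2\in\eq_c(G_{\mid V(G)\setminus I})$. The key bijection sends $(\sim,J)$ to $(I,\sim_1,\sim_2)$ where $I=\pi_\sim^{-1}(J)$ and $\sim_i$ is the restriction of $\sim$ to the corresponding part. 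One then verifies that $I$ is indeed an ideal of $G$ (using that $J$ is an ideal in the quotient and that $I$ is a union of $\sim$-classes, so an arc from $I$ to its complement must pass between distinct classes), that the restrictions $\sim_1,\sim_2$ remain in $\eq_c$ of the appropriate induced subgraphs (since each $\sim$-class is already connected in $G$ and hence in any induced subgraph containing it), and that the corresponding tensors match: $(G/\sim)_{\mid J}=G_{\mid I}/\sim_1$, the analogous identity for the complement, and $G\mid\sim=(G_{\mid I}\mid\sim_1)(G_{\mid V(G)\setminus I}\mid\sim_2)$. The inverse map reconstructs $\sim$ as the disjoint union of $\sim_1$ and $\sim_2$ on $V(G)$ and recovers $J$ as the image of $I$ in $V(G)/\sim$; checking it is well-defined is what requires $I$ to be an ideal.

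The main obstacle is the coproduct-coproduct compatibility, where the subtle point is to confirm that the combinatorial data on both sides correspond under this bijection without any multiplicity issues, and in particular that the ideal condition on $I$ is equivalent to $J=\pi_\sim(I)$ being an ideal of $G/\sim$ once $I$ is required to be a union of $\sim$-classes. The remaining compatibilities are essentially formal consequences of the definitions and the corresponding identities already established for unoriented graphs.
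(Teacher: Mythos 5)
The paper itself gives no proof of this statement --- it is quoted directly from \cite{Manchon2012} --- so there is nothing internal to compare against; judged on its own, your direct verification (multiplicativity and coassociativity of both coproducts, the counit computations, and the two compatibilities via the bijection $(\sim,J)\leftrightarrow(I,\sim_1,\sim_2)$ with $I=\pi_\sim^{-1}(J)$, whose ideal property you correctly identify as the crux) is the standard argument and is correct. The only blemish is notational: $(\varepsilon_\Delta\otimes \id)\circ\delta$ lands in $\K\otimes \calH_{\bfG_o}\cong \calH_{\bfG_o}$, so on the empty graph both sides give $1$, not $1\otimes 1$.
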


\subsection{The antipode for graphs}

\begin{defi}
A mixed graph is a triple $G=(V(G),E(G),A(G))$, such that:
\begin{itemize}
\item $V(G)$ is a finite set, whose elements are the vertices of $G$.
\item $E(G)$ is a set of unordered pairs of elements of $V(G)$, whose elements are called the edges of $G$.
\item $A(G)$ is a set of ordered pairs of elements of $V(G)$, made of distinct elements, called the arcs of $G$.
\end{itemize}
We assume that the following conditions hold: for any $x,y\in V(G)$, distinct,
\begin{itemize}
\item If $\{x,y\}\in E(G)$, then $(x,y)\notin A(G)$ and $(y,x)\notin A(G)$.
\item If $(x,y)\in A(G)$, then $(y,x)\notin A(G)$ and $\{x,y\}\notin E(G)$.
\end{itemize}
\end{defi}

Mixed graphs have interesting applications.  For example, in operational research  they may be used to model the so--called ``job--shop scheduling problems'', in which a collection of tasks is to be performed, subject to certain timing constraints: undirected edges represent  two tasks that cannot be performed simultaneously, and directed edges represent precedence constraints, when one task must be performed before another one. A different example comes from Bayesian inference, where acyclic mixed graphs (that is a graph with no cycles of directed arcs) are used: undirected edges indicate a non--causal correlation between two events;  directed edges indicate a causal correlation in which the outcome of the first event influences the probability of the second event.

\begin{defi}
Let $H$ be a mixed graph. We associate to it two simple graphs $\gr(H)$  and $\gr_0(H)$, 
 defined  respectively removing  the orientations of the oriented  edges, and removing the oriented edges, that is: 
  \begin{align*}
V(\gr(H))&=V(H),&
E(\gr(H))&=E(H)\cup\{\{x,y\}\mid (x,y)\in A(H)\},\\
V(\gr_0(H))&=V(H),&
E(\gr_0(H))&=E(H).
\end{align*}
\end{defi}

\begin{defi}
Let $G\in {\bf G}$. A partial orientation of $G$ is a mixed graph $H$ such that $\gr(H)=G$. 
We shall say that a partial orientation is not totally acyclic if there exists a sequence $(x_0,\ldots,x_n)$ 
of vertices of $G$ such that:
\begin{itemize}
\item $n\geq 2$.
\item $x_0=x_n$.
\item For any $i\in [n]$, $\{x_{i-1},x_i\}\in E(H)$ or $(x_{i-1},x_i)\in A(H)$.
\item There exists at least one $i\in [n]$ such that  $(x_{i-1},x_i)\in A(H)$.
\end{itemize}
The set of totally acyclic partial orientations of $G$ is denoted by $\potac(G)$. 
\begin{remark}
For any graph $G$, $G\in \potac(G)$.
\end{remark}
%{\bf Note pour Lo\"ic: j'ai mis $n\geq 3$, dans nos cas il n'y a pas des ar\^etes $(x,y), (y,x)$ ou edges multiples}
\end{defi}

\begin{defi}
An orientation of a simple graph $G$  is an assignment of one and only one  ordering (or direction) to each edge 
$\{u,v\}$, denoted by $(u,v)$   or $(v,u) $, as the case may be. In other words, an orientation of $G$ is a mixed graph $H$ where $E(H)=\emptyset$ and $gr(H)=G$. An orientation $H$ of $G$ is said to be acyclic if it has no directed cycles, that is if there is no sequence $(x_0,\ldots,x_n)$ of vertices of $G$ such that:
\begin{itemize}
\item $n\geq 2$.
\item $x_0=x_n$.
\item For any $i\in [n]$, $(x_{i-1},x_i)\in A(H)$.
\end{itemize}
\end{defi}

\begin{prop} \label{propantipodegraphs}
The antipode $S$ of $(\calH_\bfG,m,\Delta)$ is given by
\begin{align*}
&\forall G\in \bfG,&S(G)&=\sum_{H\in \potac(G)} (-1)^{\cc(\gr_0(H))} \gr_0(H),
\end{align*}
 For any graph $K\in \bfG$, $\cc(K)$ is the number of connected components of $K$.
\end{prop}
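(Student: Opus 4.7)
My plan is to chain three ingredients: Theorem \ref{theoantipode} (to turn $S$ into something computable from $\delta$ and a character), the fact that $\phi_{chr}$ is the unique double bialgebra morphism to $\K[X]$, and Stanley's classical reciprocity for the chromatic polynomial. Concretely, Theorem \ref{theoantipode} gives $S=(\epsilon_\delta^{*-1}\otimes \id)\circ \delta$ with $\epsilon_\delta^{*-1}(K)=\phi_{chr}(K)(-1)$ for every graph $K$. Substituting the explicit formula for $\delta$ yields
\[S(G)=\sum_{\sim\,\in\,\eq_c(G)}\phi_{chr}(G/\sim)(-1)\;(G\mid\sim).\]
By Stanley's theorem, $\phi_{chr}(K)(-1)=(-1)^{|V(K)|}\,a(K)$, where $a(K)$ is the number of acyclic orientations of $K$, so
\[S(G)=\sum_{\sim\,\in\,\eq_c(G)}(-1)^{|V(G/\sim)|}\,a(G/\sim)\;(G\mid\sim).\]

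The remaining task, and the real content of the proposition, is to repackage this indexing set as $\potac(G)$. I will construct a bijection
\[\Phi:\potac(G)\;\longrightarrow\;\bigl\{(\sim,\omega)\mid \sim\in\eq_c(G),\ \omega\text{ an acyclic orientation of }G/\sim\bigr\}.\]
Given $H\in\potac(G)$, let $\sim_H$ be the equivalence on $V(G)$ whose classes are the connected components of $(V(G),E(H))$; each class is connected in $G$ via $E(H)\subseteq E(G)$, so $\sim_H\in\eq_c(G)$. The total acyclicity of $H$ implies that no arc of $A(H)$ can have both endpoints in the same class: otherwise an $E(H)$-path between the endpoints combined with that arc would be a non-trivial cycle using at least one arc. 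Consequently $E(H)=\bigsqcup_C E(G_{\mid C})$, giving $\gr_0(H)=G\mid\sim_H$ and in particular $\cc(\gr_0(H))=|V(G/\sim_H)|$. The arcs of $A(H)$ then project to arcs of $G/\sim_H$: the projection is well-defined (two parallel arcs in opposite directions between the same classes would again produce a forbidden cycle) and acyclic (a directed cycle downstairs lifts, via the connectivity of each class in $E(H)$, to an arc-using cycle in $H$).

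The inverse $\Psi$ is straightforward: given $(\sim,\omega)$ with $\sim\in\eq_c(G)$ and $\omega$ acyclic, set $E(H):=\bigsqcup_C E(G_{\mid C})$ and orient every $G$-edge between two distinct classes $C_1,C_2$ in the direction prescribed by $\omega$ on the corresponding edge of $G/\sim$; total acyclicity of the resulting $H$ is immediate from acyclicity of $\omega$, since any cycle in $H$ using at least one arc would collapse to a non-trivial closed directed walk in $\omega$. Under $\Phi$, summing first over $\sim$ and then over acyclic orientations of $G/\sim$ becomes summing over $\potac(G)$, with $G\mid\sim_H=\gr_0(H)$ and $(-1)^{|V(G/\sim_H)|}=(-1)^{\cc(\gr_0(H))}$, which is the desired formula. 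The only delicate step is the bijection, and there the crux is showing that total acyclicity of $H$ is exactly what is needed to ensure both that arcs never lie inside a class and that the induced orientation on $G/\sim_H$ is well-defined and acyclic; once that is pinned down, the rest is bookkeeping.
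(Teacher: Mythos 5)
Your proposal is correct and follows essentially the same route as the paper: it invokes Theorem \ref{theoantipode} to reduce the antipode to $\epsilon_\delta^{*-1}(K)=\phi_{chr}(K)(-1)$, applies Stanley's reciprocity to count acyclic orientations of the contractions, and then establishes the same bijection between $\potac(G)$ and pairs $(\sim,\omega)$ with $\sim\in\eq_c(G)$ and $\omega$ an acyclic orientation of $G/\sim$ (the paper phrases this as a map $\theta:H\mapsto H/\sim_H$ proved injective and surjective). Your explicit observation that total acyclicity forces every arc to join distinct classes, so that $\gr_0(H)=G\mid\sim_H$, is a point the paper uses but states less explicitly; otherwise the two arguments coincide.
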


\begin{proof}
We make use of a well known result by Stanley \cite{Stanley1973}. It states that  the chromatic polynomial of a graph evaluated in $-1$ is up to sign the number of its acyclic orientations, more precisely: 
$$\phi_{chr}(G)(-1)=(-1)^{|V(G)|}|\{\mbox{acyclic orientations of $G$}\}|.$$
Let us apply Theorem \ref{theoantipode}. The morphism $\phi_{chr}$ is a double bialgebra morphism from
$(\calH_\bfG,m,\Delta,\delta)$ to $(\K[X],m,\Delta,\delta)$, so, for any $G\in \bfG$,
\[\epsilon_\delta^{*-1}(G)=\phi_{chr}(G)(-1)%=(-1)^{|V(G)|} |\{\mbox{acyclic orientations of $G$}\}|
.\]
Hence, 
\[S(G)=\sum_{\sim\in \eq_c[G]} (-1)^{\cl(\sim)}  |\{\mbox{acyclic orientations of $G/\sim$}\}| G\mid \sim.\]
Let us consider the set 
\[\calA=\bigsqcup_{\sim\in \eq_c[G]}\{\mbox{acyclic orientations of $G/\sim$}\}.\]
Let $H=(V(G),E(H),A(H))\in \potac(G)$. We denote by $\sim_H$ the equivalence whose classes are the
connected components of the graph $\gr_0(H)$. If $\pi\in V(G)/\sim_H$, then $H_{\mid \pi}$ is connected.
As $G_{\mid \pi}$ has more edges than $H_{\mid \pi}$ (because the edges of $H$ are edges of $G$),
it is connected, so $\sim_H\in \eq_c[G]$. Let us assume that $(x,y)\in A(H)$, and let $x'\sim_H x$, $y'\sim_H y$, such
that $\{x,y\}\in E(G)$. There exist non oriented paths $(y,y_1,\ldots,y_k,y')$  and $(x',x'_1,\ldots,x'_l,x)$ in $H$. 
If $\{x',y'\}\in E(H)$ or $(y',x')\in A(H)$, then the sequence $(x,y,y_1,\ldots,y_k,y',x',x'_1,\ldots,x'_l,x)$
proves that $H$ is not totally acyclic: this is a contradiction. So $(x',y')\in A(H)$. Hence, $H$ induces a total orientation
of $G/\sim_H$, which we denote by $H/\sim_H$. It is acyclic: if $(\overline{x_0},\ldots,\overline{x_n},\overline{x_0})$ is an
oriented cycle in $H/\sim_H$, there exist sequences $(x'_0,\ldots,x'_n,y'_0)$ and $(x''_0,\ldots,x''_n,y''_0)$ such that:
\begin{itemize}
\item For any $i$, $x'_i\sim_H x''_i \sim_H x_i$ and $y'_0\sim_H y''_0 \sim_H x_0$.
\item For any $i$, $(x''_i,x'_{i+1}) \in A(H)$ and $(y'_0,y''_0)\in A(H)$.
\end{itemize}
By definition of $\sim_H$, there exists a non oriented path in $H$ from $x'_i$ to $x''_i$ for any $i$ and from $y''_0$ to $x'_0$. 
Hence, we obtain a cycle in $H$, containing at least one arc of $H$: $H$ is not totally acyclic, this is a contradiction.
So $H/\sim_H$ is acyclic. We obtain a map
\[\theta:\left\{\begin{array}{rcl}
\potac(G)&\longrightarrow&\calA\\
H&\longmapsto&H/\sim_H.
\end{array}\right.\]

Let us prove that $\theta$ is injective. Let $H_1,H_2\in \potac(G)$ such that $H_1/\sim_{H_1}=H_2/\sim_{H_2}$.
Then $\sim_{H_1}=\sim_{H_2}$. Let $(x,y)\in A(H_1)$. Then $(\overline{x},\overline{y})\in A(H_1/\sim_{H_1})=
A(H_2/\sim_{H_2})$, so $(x,y)\in A(H_2)$. By symmetry, $A(H_1)=A(H_2)$, so $H_1=H_2$. 

Let us prove that $\theta$ is surjective. Let $\sim\in \eq_c[G]$ and $\overline{H}$ be an acyclic orientation of $G/\sim$.
We define a partial orientation $H$ of $G$ as follows:
\[A(H)=\{(x,y)\mid \{x,y\}\in E(G),\: (\overline{x},\overline{y})\in A(\overline{H})\}.\]
It is totally acyclic: any cycle in $H$ containing at least one arc induces an oriented cycle in $\overline{H}$
of length at least two, which is not possible as $\overline{H}$ is acyclic. 
As $\sim\in \eq_c[G]$, its classes are the connected components of $G\mid \sim$, which is equal to $\gr_0(H)$ by construction
of $H$. Hence, $\sim_H=\sim$ and $\theta(H)=\overline{H}$.\\
Finally,
\begin{align*}
S(G)&=\sum_{\sim\in \eq_c[G]} (-1)^{\cl(\sim)}  |\{\mbox{acyclic orientations of $G/\sim$}\}| G\mid \sim\\
&=\sum_{\sim\in \eq_c[G]} (-1)^{\cc(G\mid \sim)}  |\{\mbox{acyclic orientations of $G/\sim$}\}| G\mid \sim\\
&=\sum_{H\in \potac(G)}(-1)^{\cc(\gr_0(H))} \gr_0(H). \qedhere
\end{align*}
\end{proof}

\subsection{The Fortuin and Kasteleyn's polynomial as a Hopf morphism}

The  Fortuin--Kasteleyn polynomial, which is due to Fortuin \cite{Fortuin1971}, and  Fortuin and Kasteleyn \cite{Fortuin1972}, is a two variable polynomial, comes from a random cluster model, and it is a variant of the Tutte polynomial. We recall here both the definitions.

\begin{notation} For $G\in \bfG$, if $F\subseteq E(G)$, denote by $G_{\mid F}$ the subgraph of $G$ defined by
\begin{align*}
V(G_{\mid F})&=V(G),&E(G_{\mid F})&=F.
\end{align*}
\end{notation}

\begin{defi}
Let $G\in \bfG$. 
\begin{enumerate}
\item A spanning graph of $G$ is a graph $H$ with $V(H)=V(G)$ and $E(H)\subseteq E(G)$.
 The set of spanning graphs of $G$ is denoted by $\spann(G)$.
 The set of spanning forests of $G$ (that is, spanning graphs of $G$ which are acyclic) is denoted by $\spanfor(G)$.  
\item A covering graph of $G$ is a spanning graph $H$ of $G$ such that $\cc(H)=\cc(G)$. 
The set of covering graphs of $G$ is denoted by $\cov(G)$. 
The set of covering forests of $G$ (that is, covering graphs of $G$ which are forests)  is denoted by $\covfor(G)$. 
\end{enumerate}
\end{defi}

\begin{remark}\begin{enumerate}
\item Let $G\in \bfG$ and $H\in \spann(G)$.
The connected components of $G$ are disjoint  union of connected components of $H$,
so $\cc(H)\geq \cc(G)$. If $H$ is a covering graph of $G$, then the connected components of $G$ and $H$
are the same.
\item The spanning graphs of $G$ are the graphs $G_{\mid F}$, with $F\subseteq E(G)$. 
Therefore, for any graph $G$, $|\spann(G)|=2^{|E(G)|}$. 
\end{enumerate}
\end{remark}

\begin{defi}
To any graph $G\in \bfG$, we associate  the Fortuin-Kasteleyn polynomial $Z_G(X,Y)\in \K[X,Y]$ defined by
\[Z_G(X,Y)=\sum_{F\subseteq E(G)} X^{\cc(G_{\mid F})} Y^{|F|}.\]
\end{defi}
Recall that the rank of a graph $G\in \bfG$ is the number $r(G)=|V(G)|-\cc(G)$; the nullity $n(G)$ of $G$ is defined by the relation $n(G)+r(G)=|E(G)|$. 

\begin{defi}
The rank--generating polynomial associated to $G\in \bfG$  is:
$$S_G(X,Y)=\sum_{F\subseteq E(G)} X^{r(G)-r(G_{\mid F})} Y^{n(G_{\mid F})}.$$
The Tutte polynomial $T_G(X,Y)\in \K[X,Y]$ is a simple function of the rank--generating polynomial:
\begin{align*}
T_G(X,Y)&=S_G(X-1,Y-1)\\
&=\sum_{F\subseteq E(G)} (X-1)^{-\cc(G)+\cc(G_{\mid F})} (Y-1)^{\cc(G_{\mid F})+|F|-|V(G)|}\\
&=(X-1)^{-\cc(G)}(Y-1)^{-|V(G)|}\sum_{F\subseteq E(G)}
\left((X-1)(Y-1)\right)^{\cc(G_{\mid F})} (Y-1)^{|F|}.
\end{align*}
\end{defi}

\begin{remark}
\begin{enumerate}
\item 
From the previous equality on the Tutte polynomial and from the definition of the Fortuin--Kasteleyn polynomial, it follows that the two polynomials satisfy the following relations:
\begin{align}
\label{EQ1} T_G(X,Y)=(X-1)^{-\cc(G)} (Y-1)^{-|V(G)|} Z_G((X-1)(Y-1),Y-1);
\end{align}
equivalently, 
\begin{align}
\label{EQ2} Z_G(X,Y)&=X^{\cc(G)}Y^{|V(G)|-\cc(G)}T_G\left(\frac{X}{Y}+1,Y+1\right).
\end{align}
\item It should be noted that $Z_G$ and $T_G$ are classically defined on graphs with multiple edges and with loops. However for the purpose of the Hopf structure we are interested in,  they do not have significance in the coproduct (and we restrict our attention to simple graphs). Tutte's polynomial satisfies a recursion which uses the ''deletion/contraction`` of edges, similarly to the chromatic polynomial. More precisely, starting from the value $T_{E_n}(X,Y)=1$ (with $E_n$ the empty graph),  one has
\begin{align*}
T_G(X,Y)&=\left\{\begin{array}{ll}
XT_{G - e}&\mbox{  if } e \mbox{  is a bridge},\\
YT_{G - e}&\mbox{  if } e \mbox{  is a loop},\\
T_{G - e} + T_{G/e} &\mbox{  if } e \mbox{  is neither a bridge nor a loop},
\end{array}\right.
\end{align*}
where a  bridge for $G$ is an edge $e\in E(G)$ such that its deletion increases the number of connected components.
\end{enumerate}
\end{remark}

\begin{theo}
Let $y\in \K$. We put
\begin{align*}
\zeta_y&:\left\{\begin{array}{rcl}
\calH_\bfG&\longrightarrow&\K[X]\\
G\in \bfG&\longrightarrow&Z_G(X,y).
\end{array}\right.\\
\lambda_y&:\left\{\begin{array}{rcl}
\calH_\bfG&\longrightarrow&\K\\
G\in \bfG&\longrightarrow&\displaystyle \sum_{H\in \cov(G)} y^{|E(H)|}.
\end{array}\right.\\
\mu_y&:\left\{\begin{array}{rcl}
\calH_\bfG&\longrightarrow&\K\\
G\in \bfG&\longrightarrow&(1+y)^{|E(G)|}.
\end{array}\right.
\end{align*}
Then $\zeta_y$ is a Hopf algebra morphism from $(\calH_\bfG,m,\Delta)$ to $(\K[X],m,\Delta)$
and  $\lambda_y$ and $\mu_y$ are characters of $\calH_\bfG$. Moreover,
\begin{align*}
\zeta_y&=\phi_0\leftsquigarrow \lambda_y=\phi_{chr}\leftsquigarrow \mu_y.
\end{align*}
\end{theo}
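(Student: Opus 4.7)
The plan is to first verify the character claims, then deduce the Hopf morphism statement and the second identity from the key equality $\zeta_y = \phi_0 \leftsquigarrow \lambda_y$. To check that $\lambda_y$ and $\mu_y$ are characters: multiplicativity of $\mu_y$ is immediate from $|E(GG')|=|E(G)|+|E(G')|$. For $\lambda_y$, the connected components of a disjoint union $GG'$ split as those of $G$ together with those of $G'$, so a covering graph of $GG'$ is forced to be the disjoint union of a covering graph of $G$ and one of $G'$; unitality on the empty graph is clear.

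The main combinatorial step is proving $\zeta_y = \phi_0 \leftsquigarrow \lambda_y$. Unfolding the right-hand side gives
\[(\phi_0 \leftsquigarrow \lambda_y)(G) = \sum_{\sim \in \eq_c(G)} X^{|V(G)/\sim|}\, \lambda_y(G\mid \sim).\]
I would then reorganize $Z_G(X,y)=\sum_{F\subseteq E(G)}X^{\cc(G_{\mid F})}y^{|F|}$ by grouping the subsets $F$ according to the equivalence $\sim_F$ on $V(G)$ whose classes are the connected components of $G_{\mid F}$. Each such class is connected in $G_{\mid F}$, hence in $G$, so $\sim_F\in\eq_c(G)$. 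For a fixed $\sim\in\eq_c(G)$, the subsets $F$ with $\sim_F=\sim$ contain no edge joining distinct classes and restrict, on each class $C$, to a connected spanning subgraph of $G_{\mid C}$, i.e., to a covering graph of $G_{\mid C}$ (as $G_{\mid C}$ is connected). Their combined contribution is
\[X^{|V(G)/\sim|}\prod_{C\in V(G)/\sim} \lambda_y(G_{\mid C}) = X^{|V(G)/\sim|}\,\lambda_y(G\mid\sim),\]
which matches the expansion above. Once this identity is established, Theorem~\ref{theoaajouter} (the action theorem) ensures that $\zeta_y=\phi_0\leftsquigarrow \lambda_y$ is a bialgebra morphism, hence a Hopf algebra morphism between the Hopf algebras $(\calH_\bfG,m,\Delta)$ and $(\K[X],m,\Delta)$.

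For the remaining identity $\zeta_y = \phi_{chr}\leftsquigarrow \mu_y$, I would invoke the bijection $\phi\mapsto \epsilon_\delta\circ\phi$ from $E_{\calH_\bfG\to\K[X]}$ onto $\chara(\calH_\bfG)$ of Theorem~\ref{theoaajouter}, and check that both sides produce the character $\mu_y$. On the left,
\[\epsilon_\delta(\zeta_y(G)) = Z_G(1,y) = \sum_{F\subseteq E(G)}y^{|F|} = (1+y)^{|E(G)|} = \mu_y(G).\]
On the right, since $\phi_{chr}$ is the double bialgebra morphism of Theorem~\ref{theoaajouter}, we have $\epsilon_\delta\circ\phi_{chr}=\epsilon_\delta$, hence
\[\epsilon_\delta\circ(\phi_{chr}\leftsquigarrow\mu_y) = (\epsilon_\delta\circ\phi_{chr})\star\mu_y = \epsilon_\delta\star\mu_y = \mu_y,\]
using that $\epsilon_\delta$ is the neutral element of $\star$. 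The bijectivity in Theorem~\ref{theoaajouter} then forces the equality. The only real difficulty in the whole argument lies in the combinatorial reorganization of the second paragraph; everything else is formal manipulation of the double bialgebra machinery.
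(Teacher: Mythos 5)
Your proposal is correct and follows essentially the same route as the paper: the same multiplicativity checks for $\lambda_y$ and $\mu_y$, the same bijection between subsets $F\subseteq E(G)$ and pairs $(\sim,H)$ with $\sim\ \in\eq_c(G)$ and $H\in\cov(G\mid\sim)$ (grouping $F$ by the partition $\sim_F$ into connected components of $G_{\mid F}$), and the same use of the action/bijection statements of Theorem~\ref{theoaajouter} to deduce the Hopf morphism property and to identify $\mu_y=\epsilon_\delta\circ\zeta_y=Z_G(1,y)=(1+y)^{|E(G)|}$.
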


\begin{proof}
Let $G_1,G_2\in \bfG$. Then, for the set of covering graphs of the product, one has
\[\cov(G_1G_2)=\{H_1H_2\mid H_1\in \cov(G_1),\: H_2\in \cov(G_2)\}.\]
This implies that $\lambda_y$ is a character. Obviously, $\mu_y$ is a character.\\

Let $G\in \bfG$  and let $F\subseteq E(G)$. We define an equivalence $\sim_F\in \eq[V(G)]$
such that the classes of $\sim_F$ are the connected components of $G_{\mid F}$. 
Let $\pi$ be a class of $\sim_F$. Then $(G_{\mid F})_{\mid \pi}$ is connected by definition.
As this graph has less edges than $G_{\mid \pi}$, the latter is connected. So $\sim_F\in \eq_c[G]$.
Moreover, $G_{\mid F}$ is a covering graph of $G\mid \sim_F$, as the connected components of $G\mid \sim_F$
are the classes of $\sim_F$, that is to say the connected components of $G_{\mid F}$. 
Therefore,
\begin{align*}
\zeta_y(G)&=\sum_{F\subseteq E(G)} X^{\cc(G_{\mid F})} y^{|F|}\\
&=\sum_{\sim\in \eq_c[G]}\sum_{H\in  \cov(G\mid \sim)} X^{\cc(H)} y^{|E(H)|}\\
&=\sum_{\sim\in \eq_c[G]}\sum_{H\in  \cov(G\mid \sim)} X^{\cc(G\mid \sim)} y^{|E(H)|}\\
&=\sum_{\sim\in \eq_c[G]}\sum_{H\in  \cov(G\mid \sim)} X^{|V(G/\sim)|} y^{|E(H)|}\\
&=\sum_{\sim\in \eq_c[G]} \phi_0(G/\sim) \lambda_y(G\mid \sim)\\
&=(\phi_0\leftsquigarrow \lambda_y)(G).
\end{align*}
So $\zeta_y=\phi_0\leftsquigarrow \lambda_y$. As $\phi_0$ is a Hopf algebra morphism and $\lambda_y$ is a character,
$\zeta_y$ is a Hopf algebra morphism. Therefore, by Theorem \ref{theoaajouter}, there exists a unique character 
$\mu_y\in \Char(\calH_\bfG)$ such that $\zeta_y=\phi_{chr}\leftsquigarrow \mu_y$. Still by Theorem \ref{theoaajouter}, 
this character is $\mu_y=\epsilon_\delta\circ \zeta_y$: for any $G\in \bfG$,
\begin{align*}
\mu_y(G)&=\zeta_y(G)(1)=\sum_{F\subseteq E(G)}y^{|F|}=(1+y)^{|E(G)|}.\qedhere
\end{align*} \end{proof}

\begin{remark}
As $\zeta_y$ is a Hopf algebra morphism for any $y\in \K$, identifying $\K[X]\otimes \K[X]$ and $\K[X_1,X_2]$, we obtain that for any graph $G\in \bfG$,
\[Z_G(X_1+X_2,Y)=\sum_{V(G)=I\sqcup J} Z_{G_{\mid I}}(X_1,Y)Z_{G_{\mid J}}(X_2,Y).\]
\end{remark}

\begin{prop}\label{propZchromatique}
For any graph $G\in \bfG$,
\begin{align*}
Z_G(X,-1)&=\phi_{chr}(G),\\
Z_G(X,0)&=\phi_0(G),\\
\phi_{chr}(G)&=(-1)^{|V(G)|+\cc(G)} X^{\cc(G)}T_G(1-X,0).
\end{align*}
\end{prop}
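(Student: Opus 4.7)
My plan is to establish the three identities in order, since the third is a corollary of the first and of the relation \eqref{EQ2} already derived.

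For $Z_G(X,0)$, I would argue directly from the definition: in the sum $\sum_{F \subseteq E(G)} X^{\cc(G_{\mid F})} Y^{|F|}$ only the term $F = \emptyset$ survives at $Y = 0$, and $G_{\mid \emptyset}$ has $|V(G)|$ isolated vertices, hence $\cc(G_{\mid \emptyset}) = |V(G)|$. So $Z_G(X,0) = X^{|V(G)|} = \phi_0(G)$.

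For $Z_G(X,-1)$ I would use the factorisation $\zeta_y = \phi_{chr} \leftsquigarrow \mu_y$ with $\mu_y(G) = (1+y)^{|E(G)|}$ from the previous theorem. At $y = -1$ one has $\mu_{-1}(G) = 0^{|E(G)|}$, which equals $1$ if $E(G) = \emptyset$ and $0$ otherwise; that is exactly $\epsilon_\delta(G)$. Since $\epsilon_\delta$ is the unit of the convolution $\star$ on $\chara(\calH_\bfG)$, it acts as identity for $\leftsquigarrow$, so $\zeta_{-1} = \phi_{chr} \leftsquigarrow \epsilon_\delta = \phi_{chr}$. (Alternatively, one can verify this directly by the inclusion-exclusion formula for the chromatic polynomial $\phi_{chr}(G) = \sum_{F \subseteq E(G)} (-1)^{|F|} X^{\cc(G_{\mid F})}$, which is precisely $Z_G(X,-1)$.)

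For the third identity, I would apply \eqref{EQ2} with $Y = -1$:
\[
Z_G(X,-1) = X^{\cc(G)} (-1)^{|V(G)| - \cc(G)} T_G\!\left(\tfrac{X}{-1} + 1, 0\right) = (-1)^{|V(G)|-\cc(G)} X^{\cc(G)} T_G(1-X, 0).
\]
Combining this with $Z_G(X,-1) = \phi_{chr}(G)$ from step two, and noting that $(-1)^{|V(G)| - \cc(G)} = (-1)^{|V(G)| + \cc(G)}$ since the exponents differ by the even integer $2\cc(G)$, yields the claimed formula. There is no genuine obstacle here: the only subtle point is recognising that $\mu_{-1} = \epsilon_\delta$, which makes the Hopf-algebraic argument collapse cleanly.
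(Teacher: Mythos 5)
Your proof is correct and follows essentially the same route as the paper: the key observation $\mu_{-1}=\epsilon_\delta$ gives $\zeta_{-1}=\phi_{chr}$, and the third identity then falls out of \eqref{EQ2} at $Y=-1$ exactly as you compute. The only (harmless) deviation is that for $Z_G(X,0)=\phi_0(G)$ you evaluate the defining sum directly at $Y=0$, whereas the paper instead shows $\lambda_0=\epsilon_\delta$ and uses $\zeta_0=\phi_0\leftsquigarrow\epsilon_\delta$; both arguments are immediate and equivalent in substance.
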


\begin{proof}
For any graph $G\in\bfG$,
\[\mu_{-1}(G)=\begin{cases}
1\mbox{ if }E(G)=\emptyset,\\
0\mbox{ otherwise},
\end{cases}\]
so $\mu_{-1}=\epsilon_\delta$. Therefore, 
\[\zeta_{-1}=\phi_{chr}\leftsquigarrow \epsilon_\delta=\phi_{chr}.\]
Using (\ref{EQ2}), we obtain the relation between $\phi_{chr}$ and $T_G(1-X,0)$. 
Moreover, if the graph $G\in \bfG$ has $E(G)\neq \emptyset$, then for any $H\in \cov(G)$ one has $E(H)\neq \emptyset$. Therefore, 
\[\lambda_0(G)=\begin{cases}
1\mbox{ if }E(G)=\emptyset,\\
0\mbox{ otherwise},
\end{cases}\]
so $\lambda_0=\epsilon_\delta$. Consequently,
\[\zeta_0=\phi_0\leftsquigarrow \epsilon_\delta=\phi_0. \qedhere\]
\end{proof}

\begin{prop}
The character $\mu_0$ is invertible for the convolution $\star$ and $\mu_0^{\star-1}=\lambda_{-1}$. 
For any $y\in \K$, 
\begin{align*}
\mu_y&=\mu_0\star \lambda_y,&\lambda_y&=\lambda_{-1}\star \mu_y.
\end{align*}
\end{prop}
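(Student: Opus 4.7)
The plan is to prove the central identity $\mu_0 \star \lambda_y = \mu_y$ by a direct computation, then derive the other two claims by specializing at $y=-1$ and invoking Lemma \ref{lemmeinversibles} and monoid formalities.

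For the main identity, I would unfold the convolution using $\delta$: for $G\in\bfG$,
\[(\mu_0 \star \lambda_y)(G) = \sum_{\sim\,\in\,\eq_c(G)} \mu_0(G/\sim)\,\lambda_y(G\mid\sim).\]
Since $\mu_0(G/\sim) = (1+0)^{|E(G/\sim)|} = 1$, this reduces to $\sum_{\sim\in\eq_c(G)} \sum_{H\in\cov(G\mid\sim)} y^{|E(H)|}$. The key step is a bijection between the set of pairs $(\sim,H)$ with $\sim\in\eq_c(G)$ and $H\in\cov(G\mid\sim)$ on the one hand, and arbitrary subsets $F\subseteq E(G)$ on the other. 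The forward map sends $(\sim,H)$ to $F=E(H)$. The reverse map sends $F$ to the pair $(\sim_F,G_{\mid F})$, where $\sim_F$ is the equivalence on $V(G)$ whose classes are the connected components of $G_{\mid F}$. I must check that $\sim_F\in\eq_c(G)$ (each class $C$ of $\sim_F$ is connected in $G_{\mid F}$, hence connected in the larger graph $G_{\mid C}$) and that $G_{\mid F}\in\cov(G\mid\sim_F)$ (both graphs share the same vertex set and the same connected components, namely the classes of $\sim_F$); one direction of the inverse property is that $E(G_{\mid F})=F$, and the other direction uses that when $H$ covers $G\mid\sim$, the connected components of $H$ are forced to be the classes of $\sim$. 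Summing then gives $(\mu_0 \star \lambda_y)(G) = \sum_{F\subseteq E(G)} y^{|F|} = (1+y)^{|E(G)|} = \mu_y(G)$.

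With this established, specializing at $y=-1$ gives $\mu_0\star \lambda_{-1}=\mu_{-1}$, and a direct inspection of the definition shows $\mu_{-1}(G)=0^{|E(G)|}=\epsilon_\delta(G)$, so $\mu_0\star \lambda_{-1}=\epsilon_\delta$. Because $\mu_0(\tun)=(1+0)^0=1\neq 0$, Lemma \ref{lemmeinversibles} guarantees that $\mu_0$ is $\star$-invertible, and then any right inverse must coincide with the two-sided inverse, so $\mu_0^{\star-1}=\lambda_{-1}$. Finally, left-convolving the identity $\mu_y=\mu_0\star \lambda_y$ by $\lambda_{-1}$ yields $\lambda_{-1}\star \mu_y=\lambda_{-1}\star \mu_0\star\lambda_y=\epsilon_\delta\star \lambda_y=\lambda_y$, using associativity of $\star$.

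The hard part is the bijective step, since all remaining arguments are formal consequences. The delicate points are the two verifications that $\sim_F\in\eq_c(G)$ (using that edges of $G_{\mid F}$ are edges of $G$, so connectedness on each class persists when passing to the induced subgraph of $G$) and that $G_{\mid F}$ has the correct number of connected components to qualify as a cover of $G\mid\sim_F$; once these are in hand the bijection is essentially tautological and the rest of the proposition follows immediately.
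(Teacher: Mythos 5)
Your proof is correct, but it takes a different route from the paper's. The paper derives the identity $\mu_y=\mu_0\star\lambda_y$ purely formally: it uses the already-established facts that $\zeta_y=\phi_0\leftsquigarrow\lambda_y=(\phi_0\otimes\lambda_y)\circ\delta$ and that $\mu_y=\epsilon_\delta\circ\zeta_y$ (both from the theorem identifying $Z_G(X,y)$ as a Hopf algebra morphism), together with the one-line computation $\epsilon_\delta\circ\phi_0(G)=1^{|V(G)|}=1=\mu_0(G)$; composing gives $\mu_y=\epsilon_\delta\circ(\phi_0\otimes\lambda_y)\circ\delta=(\mu_0\otimes\lambda_y)\circ\delta=\mu_0\star\lambda_y$ with no further combinatorics. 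You instead expand $\mu_0\star\lambda_y$ directly and re-establish the bijection between pairs $(\sim,H)$ with $\sim\in\eq_c(G)$, $H\in\cov(G\mid\sim)$ and subsets $F\subseteq E(G)$ --- but this is exactly the bijection already built in the paper's proof that $\zeta_y=\phi_0\leftsquigarrow\lambda_y$, so you are duplicating work that the formal argument lets you inherit for free. What your version buys is self-containedness at the level of characters (no need to pass through $\K[X]$ and evaluate at $1$); what the paper's version buys is brevity and a cleaner illustration of the double-bialgebra machinery ($\epsilon_\delta\circ(\phi\leftsquigarrow\lambda)=(\epsilon_\delta\circ\phi)\star\lambda$). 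Your endgame --- specializing at $y=-1$ to get $\mu_0\star\lambda_{-1}=\mu_{-1}=\epsilon_\delta$, invoking Lemma \ref{lemmeinversibles} via $\mu_0(\tun)=1\neq 0$, and then left-convolving by $\lambda_{-1}$ to get $\lambda_y=\lambda_{-1}\star\mu_y$ --- matches the paper's (which leaves the last identity implicit) and is sound.
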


\begin{proof}
For any $G\in \bfG$,
\[\epsilon_\delta \circ \phi_0(G)=1^{|V(G)|}=1=\mu_0(G),\]
so $\epsilon_\delta\circ \phi_0=\mu_0$. Moreover, 
\begin{align*}
\mu_y&=\epsilon_\delta \circ \zeta_y\\
&=\epsilon_\delta \circ (\phi_0\otimes \lambda_y)\circ \delta\\
&=((\epsilon_\delta \circ \phi_0)\otimes \lambda_y)\circ \delta\\
&=(\mu_0\otimes \lambda_y)\circ \delta\\
&=\mu_0\star \lambda_y.  
\end{align*}
As noticed in the proof of Proposition \ref{propZchromatique}, $\mu_{-1}=\epsilon_\delta$,  so 
$\epsilon_\delta=\mu_0\star \lambda_{-1}$. 
As $\mu_0(\grun)=1$, $\mu_0$ is invertible for $\star$ (Lemma \ref{lemmeinversibles}) and $\mu_0^{\star-1}=\lambda_{-1}$. 
\end{proof}

\begin{prop}\label{propantipodeZG}
For any graph $G$,
\begin{align*}
Z_G(-X,Y)&=\sum_{H\in \potac(G)} (-1)^{\cc(\gr_0(H))} Z_{\gr_0(H)}(X,Y),\\
T_G(2-X,Y)&=\sum_{H\in \potac(G)} (1-X)^{\cc(\gr_0(H))-\cc(G)} T_{\gr_0(H)}(X,Y).
\end{align*}
\end{prop}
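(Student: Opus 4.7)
The plan is to deduce both identities from the antipode formula for graphs (Proposition \ref{propantipodegraphs}) transported through the Hopf algebra morphism $\zeta_y$, followed by a bookkeeping conversion from $Z_G$ to $T_G$.

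First, I would exploit that $\zeta_y:(\calH_\bfG,m,\Delta)\longrightarrow(\K[X],m,\Delta)$ is a Hopf algebra morphism for every $y\in\K$ (already proven in the previous theorem). Since any Hopf algebra morphism commutes with the antipodes, and since the antipode of $\K[X]$ for the coproduct $\Delta(X)=X\otimes 1+1\otimes X$ is the substitution $P(X)\longmapsto P(-X)$, one obtains
\[
Z_G(-X,y)=\zeta_y(S(G)).
\]
Plugging in the antipode formula of Proposition \ref{propantipodegraphs} and applying $\zeta_y$ termwise yields
\[
Z_G(-X,y)=\sum_{H\in\potac(G)}(-1)^{\cc(\gr_0(H))}Z_{\gr_0(H)}(X,y),
\]
which is the first identity (with $y$ renamed $Y$). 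This step should be essentially immediate; the only verification needed is that the antipode of $\K[X]$ is indeed the sign substitution, which is standard.

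For the second identity, the plan is to convert the first one from $Z$ to $T$ using the conversion formulas (\ref{EQ1}) and (\ref{EQ2}). Starting from
\[
T_G(2-X,Y)=(1-X)^{-\cc(G)}(Y-1)^{-|V(G)|}Z_G\bigl(-(X-1)(Y-1),\,Y-1\bigr),
\]
I apply the first identity with the substitution $X\leadsto(X-1)(Y-1)$, $y=Y-1$, obtaining
\[
Z_G\bigl(-(X-1)(Y-1),\,Y-1\bigr)=\sum_{H\in\potac(G)}(-1)^{\cc(\gr_0(H))}Z_{\gr_0(H)}\bigl((X-1)(Y-1),\,Y-1\bigr).
\]
Using (\ref{EQ2}) in the reverse direction on each $Z_{\gr_0(H)}$ (noting that $|V(\gr_0(H))|=|V(G)|$), each term becomes
\[
\bigl((X-1)(Y-1)\bigr)^{\cc(\gr_0(H))}(Y-1)^{|V(G)|-\cc(\gr_0(H))}T_{\gr_0(H)}(X,Y).
\]
After combining the prefactors $(1-X)^{-\cc(G)}(Y-1)^{-|V(G)|}$ with these monomials and collecting signs (noting that $(-1)^{\cc(\gr_0(H))}(X-1)^{\cc(\gr_0(H))}=(1-X)^{\cc(\gr_0(H))}$), the $(Y-1)$ factors cancel exactly and one lands on
\[
T_G(2-X,Y)=\sum_{H\in\potac(G)}(1-X)^{\cc(\gr_0(H))-\cc(G)}T_{\gr_0(H)}(X,Y).
\]

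The only mildly delicate point is the second step, where one has to check that the exponents of $(Y-1)$ and of $(X-1)$ recombine correctly; this is a short bookkeeping check since the dependence on $Y$ enters only through $(Y-1)^{|V(G)|-\cc(\gr_0(H))}$ in the numerator against $(Y-1)^{|V(G)|}$ in the denominator, leaving $(Y-1)^{-\cc(\gr_0(H))}$ which cancels against the $(Y-1)^{\cc(\gr_0(H))}$ coming from $((X-1)(Y-1))^{\cc(\gr_0(H))}$. No combinatorial argument on $\potac(G)$ is needed beyond what was already established in the proof of Proposition \ref{propantipodegraphs}.
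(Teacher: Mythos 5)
Your proof is correct and follows essentially the same route as the paper: transport the antipode formula of Proposition \ref{propantipodegraphs} through the Hopf algebra morphism $\zeta_y$ (using that $S$ on $\K[X]$ is $P(X)\mapsto P(-X)$), then convert from $Z$ to $T$ via the change of variables (\ref{EQ1})--(\ref{EQ2}). Your bookkeeping in the second step cleanly yields the factor $(1-X)^{\cc(\gr_0(H))-\cc(G)}$ appearing in the statement, whereas the paper's own final displayed line contains a sign slip, writing $(X-1)^{\cc(\gr_0(H))-\cc(G)}$ instead.
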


\begin{proof}
For any $y\in \K$, $\zeta_y:(\calH_\bfG,m,\Delta)\longrightarrow (\K[X],m,\Delta)$ is a bialgebra morphism;
as both $(\calH_\bfG,m,\Delta)$ and $(\K[X],m,\Delta)$ are Hopf algebras, $\zeta_y$ is a Hopf algebra morphism,
that is to say $S\circ \zeta_y=\zeta_y\circ S$. For any graph $G$, this gives, with Proposition \ref{propantipodegraphs},
\begin{align*}
Z_G(-X,y)&=S\circ \zeta_y(G)\\
&=\zeta_y\circ S(G)\\
&\zeta_y\left(\sum_{H\in \potac(G)} (-1)^{\cc(\gr_0(H))} \gr_0(H)\right)\\
&=\sum_{H\in \potac(G)} (-1)^{\cc(\gr_0(H))} Z_{\gr_0(H)}(X,y),
\end{align*}
which implies the first result. Then, 
\begin{align*}
T_G(2-X,Y)&=(X-1)^{-\cc(G)}(Y-1)^{-|V(G)|} Z_G((X-1)(1-Y),1-Y)\\
&=(-1)^{-\cc(G)}(1-X)^{-\cc(G)} (Y-1)^{-|V(G)|} Z_G(-(1-X)(1-Y),1-Y)\\
&=\sum_{H\in \potac(G)} (-1)^{\cc(\gr_0(H))-\cc(G)}(1-X)^{-\cc(G)} (Y-1)^{-|V(G)|}\\
& Z_{\gr_0(H)}((1-X)(1-Y),1-Y)\\
&=\sum_{H\in \potac(G)} (-1)^{\cc(\gr_0(H))-\cc(G)}(1-X)^{\cc(\gr_0(H))-\cc(G)} T_{\gr_0(H)}(X,Y)\\
&=\sum_{H\in \potac(G)} (X-1)^{\cc(\gr_0(H))-\cc(G)} T_{\gr_0(H)}(X,Y). \qedhere
\end{align*}
\end{proof}

\section{Combinatorial interpretations}\label{sec:CombInterp}

\subsection{For the Tutte polynomial}

\begin{lemma}
For any graph $G\in \bfG$, for any $y\in \K$,
\[\lambda_y(G)=y^{|V(G)|-\cc(G)} T_G(1,1+y).\]
\end{lemma}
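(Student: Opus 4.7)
The goal is to evaluate $T_G(1,1+y)$ and recognize the covering-graph sum $\lambda_y(G)$ emerging after a suitable factorization. The cleanest route is via the rank-generating polynomial $S_G$, which sidesteps the $(X-1)^{-\cc(G)}$ factor in the definition of $T_G$ and the apparent singularity at $X=1$.

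First I would substitute directly into $T_G(X,Y)=S_G(X-1,Y-1)$ to get
\[
T_G(1,1+y)=S_G(0,y)=\sum_{F\subseteq E(G)} 0^{r(G)-r(G_{\mid F})}\, y^{n(G_{\mid F})}.
\]
Since $G_{\mid F}$ is a spanning subgraph of $G$, one has $\cc(G_{\mid F})\geq \cc(G)$, hence $r(G_{\mid F})\leq r(G)$, with equality exactly when $\cc(G_{\mid F})=\cc(G)$, i.e.\ when $G_{\mid F}\in \cov(G)$. With the convention $0^{0}=1$ and $0^{k}=0$ for $k>0$, only these $F$ survive in $S_G(0,y)$.

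Next I would simplify the exponent of $y$ for those surviving terms. For $F$ with $G_{\mid F}\in\cov(G)$,
\[
n(G_{\mid F})=|F|-r(G_{\mid F})=|F|-r(G)=|F|-|V(G)|+\cc(G),
\]
so
\[
T_G(1,1+y)=\sum_{H\in\cov(G)} y^{|E(H)|-|V(G)|+\cc(G)}=y^{\cc(G)-|V(G)|}\sum_{H\in\cov(G)} y^{|E(H)|}=y^{\cc(G)-|V(G)|}\lambda_y(G).
\]
Multiplying both sides by $y^{|V(G)|-\cc(G)}$ gives the claimed identity.

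I expect no real obstacle: the only subtle point is the $0^0=1$ convention used to pick out the covering graphs, which is standard when reading off the constant term of a polynomial in $X-1$. (Alternatively, one could avoid $S_G$ and extract the coefficient of $X^{\cc(G)}$ in $Z_G(X,y)$ directly, using the factorization $Z_G(X,y)=X^{\cc(G)}\lambda_y(G)+X^{\cc(G)+1}R(X,y)$ coming from $\cc(G_{\mid F})\geq \cc(G)$, and then plugging into relation (\ref{EQ1}); but the $S_G$ route is shorter.)
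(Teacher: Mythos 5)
Your proof is correct, but it takes a genuinely different route from the paper's. You argue directly from the subset expansion: $T_G(1,1+y)=S_G(0,y)$ kills every term with $r(G)-r(G_{\mid F})>0$, leaving exactly the covering graphs, and the bookkeeping $n(G_{\mid F})=|F|-|V(G)|+\cc(G)$ then yields $\lambda_y(G)$ after multiplying through by $y^{|V(G)|-\cc(G)}$; since $|E(H)|\geq |V(G)|-\cc(G)$ for every $H\in\cov(G)$, all exponents are nonnegative, so the identity holds as an identity of polynomials in $y$ and the case $y=0$ causes no trouble despite the intermediate factor $y^{\cc(G)-|V(G)|}$. The paper instead reuses the Hopf-algebraic identity $\zeta_y=\phi_0\leftsquigarrow\lambda_y$ established earlier: writing $\zeta_y(G)=\sum_{\sim\in\eq_c[G]}X^{\cl(\sim)}\lambda_y(G\mid\sim)$ and observing that the minimal power $X^{\cc(G)}$ is attained only for the partition into connected components, it identifies the coefficient of $X^{\cc(G)}$ in $Z_G(X,y)$ with $\lambda_y(G)$ on one side and, via relation (\ref{EQ2}), with $y^{|V(G)|-\cc(G)}T_G(1,1+y)$ on the other. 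Your argument is shorter and entirely elementary, never invoking the coaction; the paper's version stays inside the algebraic framework and presents the lemma as a statement about the leading behaviour in $X$ of the morphism $\zeta_y$, a pattern it then reuses in the subsequent propositions evaluating $T_G(1,1)$ and $T_G(2,1)$ by further limiting arguments in $y$ and $x$.
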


\begin{proof}
By (\ref{EQ2}),
\[\lim_{X\longrightarrow 0} \frac{\zeta_y(G)}{X^{\cc(G)}}=y^{|V(G)|-\cc(G)} T_G(1,1+y).\]
Moreover,
\[\zeta_y(G)=(\phi_0\leftsquigarrow \lambda_y)(G)=\sum_{\sim\in \eq_c[G]} X^{\cl(\sim)} \lambda_y(G\mid \sim).\]
If $\sim\in \eq_c[G]$, then $\cl(\sim)\geq \cc(G)$, as the classes of $\sim$ as connected, 
and $\cl(\sim)=\cc(G)$ if, and only if, $\sim$ is the equivalence $\sim_c$ whose classes are the connected components of $G$.
Therefore,
\[\lim_{X\longrightarrow 0} \frac{\zeta_y(X)}{X^{\cc(G)}}=\lambda_y(G\mid \sim_c)=\lambda_y(G).\qedhere\]
\end{proof}

\begin{prop}
For any graph $G\in \bfG$,
\begin{align*}
T_G(1,2)&=|\cov(G)|,& T_G(1,1)&=|\covfor(G)|.
\end{align*}
\end{prop}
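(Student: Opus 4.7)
The plan is to derive both identities from the previous lemma, which states that $\lambda_y(G)=y^{|V(G)|-\cc(G)}T_G(1,1+y)$, combined with the defining formula $\lambda_y(G)=\sum_{H\in\cov(G)}y^{|E(H)|}$.

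For the first identity, I would simply specialize at $y=1$. On the one hand the lemma gives $\lambda_1(G)=T_G(1,2)$, since the prefactor collapses to $1$. On the other hand the definition gives $\lambda_1(G)=\sum_{H\in\cov(G)}1=|\cov(G)|$. This yields $T_G(1,2)=|\cov(G)|$ with no further work.

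For the second identity, setting $y=0$ directly fails because the prefactor $y^{|V(G)|-\cc(G)}$ vanishes. Instead I would compare the two expressions as polynomials in $y$ and extract the coefficient of $y^{|V(G)|-\cc(G)}$. The key combinatorial step is to observe that every $H\in\cov(G)$ satisfies $|E(H)|\geq |V(G)|-\cc(G)$, with equality if and only if $H\in\covfor(G)$: indeed, a covering graph has the same connected components as $G$, and a connected graph on $n$ vertices has at least $n-1$ edges, with equality exactly for trees. Summing over the components of $G$ gives the bound, and equality in each component amounts to being a forest. Consequently
\[\lambda_y(G)=|\covfor(G)|\,y^{|V(G)|-\cc(G)}+\text{(higher powers of }y).\]
Comparing with $y^{|V(G)|-\cc(G)}T_G(1,1+y)=T_G(1,1)\,y^{|V(G)|-\cc(G)}+O\bigl(y^{|V(G)|-\cc(G)+1}\bigr)$ and reading off the leading coefficient gives $T_G(1,1)=|\covfor(G)|$.

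The main (mild) obstacle is the degeneracy at $y=0$ for the second identity; once one realizes the correct substitute is a lowest-degree coefficient extraction, the core content reduces to the elementary graph-theoretic observation that minimum-edge covering graphs are precisely covering forests. No other machinery is needed beyond the already-established lemma.
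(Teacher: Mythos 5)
Your proposal is correct and follows essentially the same route as the paper: specialization at $y=1$ for the first identity, and for the second the same observation that $|E(H)|\geq |V(G)|-\cc(G)$ for $H\in\cov(G)$ with equality exactly for covering forests, followed by extraction of the lowest-order coefficient (the paper phrases this as the limit of $\lambda_y(G)/y^{|V(G)|-\cc(G)}$ as $y\to 0$). No gaps.
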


\begin{proof}
For $y=1$, by the previous Lemma one has $T_G(1,2)=\lambda_1(G)=\displaystyle \sum_{H\in \cov(G)} 1^{|E(H)|}
=|\cov(G)|$. 
Let $G$ be a covering graph of $G$. Then $G$ has at least $|V(G)|-\cc(G)$ edges, with an equality if, and only if,
$G$ is a covering forest of $G$. Hence,
\[\lambda_y(G)=y^{|V(G)|-\cc(G)}|\covfor(G)|+\mathcal{O}\left(y^{|V(G)|-\cc(G)+1}\right).\]
This implies
\begin{align*}
\lim_{y\longrightarrow 0} \frac{\lambda_y(G)}{y^{|V(G)|-\cc(G)}}&=|\covfor(G)|
=\lim_{y\longrightarrow 0}T_G(1,1+y)=T_G(1,1). \qedhere
\end{align*}
\end{proof}

\begin{prop}
For any graph $G\in \bfG$, $T_G(2,1)=|\spanfor(G)|$. 
\end{prop}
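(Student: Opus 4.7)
The plan is to evaluate $T_G(2,1)$ directly from the rank--nullity form of the Tutte polynomial given in the preceding subsection, namely
\[
T_G(X,Y)=\sum_{F\subseteq E(G)} (X-1)^{\cc(G_{\mid F})-\cc(G)}\, (Y-1)^{\cc(G_{\mid F})+|F|-|V(G)|}.
\]
Setting $X=2$ makes each factor $(X-1)^{\cc(G_{\mid F})-\cc(G)}=1^{\cc(G_{\mid F})-\cc(G)}$ equal to $1$, so the evaluation reduces to summing $(Y-1)^{\cc(G_{\mid F})+|F|-|V(G)|}$ at $Y=1$. Viewed as a polynomial in $Y$, the monomial $(Y-1)^k$ evaluated at $Y=1$ is $1$ when $k=0$ and $0$ when $k\geq 1$.

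The key combinatorial ingredient is the standard bound $|E(H)|\geq |V(H)|-\cc(H)$ for every graph $H$, with equality if and only if $H$ is a forest. I would justify this in one line by noting that each connected component of $H$ with $v$ vertices contains at least $v-1$ edges, with equality exactly when the component is a tree, and summing over components. Applied to $H=G_{\mid F}$, whose vertex set is $V(G)$, this yields
\[
\cc(G_{\mid F})+|F|-|V(G)|\;=\;n(G_{\mid F})\;\geq\;0,
\]
with equality exactly when $G_{\mid F}$ is a forest, i.e.\ $F\in \spanfor(G)$ in the sense of the paper's definition of a spanning forest (vertex set equal to $V(G)$, acyclic).

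Combining these observations, the only $F\subseteq E(G)$ that contribute to $T_G(2,1)$ are those for which $G_{\mid F}$ is a spanning forest, and each such $F$ contributes exactly $1$. Hence $T_G(2,1)=|\spanfor(G)|$. I do not expect any genuine obstacle here: the argument is a direct substitution into the rank--nullity formula followed by the recognition that the exponent of $Y-1$ is the nullity of the spanning subgraph, which vanishes precisely on spanning forests. Note that this proof does not need to invoke the Hopf/double-bialgebra machinery developed earlier, unlike the two preceding propositions concerning $T_G(1,2)$ and $T_G(1,1)$, which truly rely on the character $\lambda_y$ and the morphism $\zeta_y$.
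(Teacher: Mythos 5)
Your proof is correct, but it takes a genuinely different and more elementary route than the paper. You substitute directly into the rank--nullity expansion $T_G(X,Y)=\sum_{F\subseteq E(G)}(X-1)^{\cc(G_{\mid F})-\cc(G)}(Y-1)^{n(G_{\mid F})}$: at $X=2$ the first factor is identically $1$, and at $Y=1$ the term $(Y-1)^{n(G_{\mid F})}$ survives exactly when $n(G_{\mid F})=0$, i.e.\ when $G_{\mid F}$ is a spanning forest; the inequality $n(G_{\mid F})\geq 0$ with equality iff $G_{\mid F}$ is a forest is the standard fact you cite. The paper instead stays inside its Hopf-algebraic framework: it uses the identity $\zeta_x(G)(x)=x^{|V(G)|}T_G(2,1+x)$ coming from the relation between $Z_G$ and $T_G$, expands $\zeta_x=\phi_0\leftsquigarrow\lambda_x$ as a double sum over equivalences $\sim\in\eq_c[G]$ and covering graphs $H\in\cov(G\mid\sim)$, shows the minimal exponent $\cl(\sim)+|E(H)|=|V(G)|$ is attained exactly by spanning forests, and extracts $|\spanfor(G)|$ as the limit of $\zeta_x(G)(x)/x^{|V(G)|}$ as $x\to 0$. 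The two arguments hinge on the same combinatorial kernel (an edge-count lower bound attained exactly on forests), but yours is shorter and self-contained, while the paper's version keeps the proof uniform in style with the adjacent evaluations $T_G(1,2)$ and $T_G(1,1)$ and illustrates how the coaction $\leftsquigarrow$ repackages the sum over edge subsets. Your closing remark is accurate: this particular evaluation does not need the bialgebra machinery.
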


\begin{proof}
For any $x\in \K$, by (\ref{EQ2}),
\[\zeta_x(G)(x)=x^{\cc(G)}x^{|V(G)|-\cc(G)}T_G(2,1+x)=x^{|V(G)|} T_G(2,1+x).\]
Moreover,
\begin{align*}
\zeta_x(G)(x)&=(\phi_0\leftsquigarrow \lambda_x)(G)(x)
=\sum_{\sim\in \eq_c[G]} \sum_{H\in \cov(G\mid \sim)} x^{\cl(\sim)+|E(H)|}.
\end{align*}
Let $\sim \in \eq_c[G]$ and $H\in \cov(G\mid \sim)$. Then $H$ has $\cc(G\mid \sim)=\cl(\sim)$
connected components, so $|E(H)|\geq |V(G)|-\cc(\sim)$ and $\cl(\sim)+|E(H)|\geq |V(G)|$,
with equality if, and only if $H$ is a forest. Conversely, if $F\in \spanfor(G)$, 
denoting $\sim_F$ the equivalence whose classes are the connected components
of $G$, then $\sim_F\in \eq_c[G]$ and, moreover,  $F\in \cov(G\mid \sim_F)$, contributing with $x^{|V(G)|}$:
\[\zeta_x(G)(x)=|\spanfor(G)|x^{|V(G)|}+\mathcal{O}\left(x^{|V(G)|}\right).\]
Finally,
\[\lim_{x\longrightarrow 0}\dfrac{\zeta_x(G)(x)}{x^{|V(G)|}}=|\spanfor(G)|=T_G(2,1). \qedhere\]
\end{proof}

\subsection{For the Fortuin and Kasteleyn's polynomial}

\begin{prop}
Let $G$ be a graph and let $(x,y)\in \N\times \N$.
 A compatible $(x,y)$-pair of colorings of $G$ is a pair of maps $(c_V,c_E)$ 
\begin{align*}
c_V&:V(G)\longrightarrow \{1,\ldots,x\},&
c_E&:E(G)\longrightarrow\{0,\ldots,y\},
\end{align*}
such that for any $e=\{v,w\}\in E(G)$, $c_E(e)\neq 0$ if, and only if  $c_V(v)=c_V(w)$.
We denote by $\pc_{x,y}(G)$ the set of compatible $(x,y)$-pairs of colorings of $G$. 
For any $x\in \N$, for any $y\in \Z_{\geq -1}$, 
\[Z_G(x,y)=|\pc_{x,y+1}(G)|.\]
For any $x\in \N$,  for any $y\in \N_{\geq 1}$,
\[Z_G(x,-y)=\sum_{(c_V,c_E)\in \pc_{x,y-1}(G)} (-1)^{|c_E^{-1}([y-1])|}.\]
\end{prop}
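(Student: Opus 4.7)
My plan is to establish a unified identity expressing $Z_G(x,Y)$ as a sum over vertex colorings, and then to specialize $Y$ to deduce both claims in parallel.

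First, for a coloring $c_V \colon V(G) \to [x]$, let $M(c_V) \subseteq E(G)$ denote the set of monochromatic edges, i.e., edges $\{v,w\}$ with $c_V(v) = c_V(w)$. Starting from the definition
\[Z_G(x, Y) = \sum_{F \subseteq E(G)} x^{\cc(G_{\mid F})} Y^{|F|},\]
the key observation is that $x^{\cc(G_{\mid F})}$ counts the colorings $c_V \colon V(G) \to [x]$ that are constant on every connected component of $G_{\mid F}$, which is precisely the condition $F \subseteq M(c_V)$. Swapping the two sums and applying the binomial theorem yields the master identity
\[Z_G(x, Y) = \sum_{c_V \colon V(G) \to [x]} \sum_{F \subseteq M(c_V)} Y^{|F|} = \sum_{c_V \colon V(G) \to [x]} (1+Y)^{|M(c_V)|}.\]

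For the first statement I would specialize $Y = y$ with $y \geq -1$ and interpret the factor $(y+1)^{|M(c_V)|}$ combinatorially: given $c_V$, this is exactly the number of edge colorings $c_E \colon E(G) \to \{0, 1, \dots, y+1\}$ forming a compatible $(x, y+1)$-pair with $c_V$, since non-monochromatic edges must map to $0$ while each monochromatic edge can take any value in $[y+1]$. Summing over $c_V$ gives $|\pc_{x, y+1}(G)|$.

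For the second statement I would specialize $Y = -y$ with $y \geq 1$, writing $(1-y)^{|M(c_V)|} = (-1)^{|M(c_V)|} (y-1)^{|M(c_V)|}$. The unsigned factor $(y-1)^{|M(c_V)|}$ again counts edge colorings $c_E \colon E(G) \to \{0, \dots, y-1\}$ compatible with $c_V$, and in any such pair the edges receiving a nonzero label (i.e., lying in $c_E^{-1}([y-1])$) are exactly those of $M(c_V)$, so $(-1)^{|M(c_V)|} = (-1)^{|c_E^{-1}([y-1])|}$. Summing over all compatible pairs then recovers the claimed signed formula.

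I do not anticipate a serious obstacle: the entire argument is a double counting via the binomial theorem applied inside the polynomial $Z_G(x,Y)$. The only delicate point is the bookkeeping around the index shift (the parameter $y$ in $Z_G$ corresponds to $y+1$, respectively $y-1$, in $\pc$), together with a sanity check on the degenerate cases: at $y=-1$ in part one and at $y=1$ in part two, both formulas collapse to $|\pc_{x,0}(G)| = \phi_{chr}(G)(x)$, consistently with Proposition \ref{propZchromatique}.
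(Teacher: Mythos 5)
Your argument is correct, but it takes a genuinely different route from the paper. You work directly from the definition of $Z_G$, exchanging the sum over edge subsets $F$ with a sum over vertex colorings via the observation that $x^{\cc(G_{\mid F})}$ counts the colorings constant on the components of $G_{\mid F}$ (equivalently, those $c_V$ with $F\subseteq M(c_V)$), and then the binomial theorem gives the master identity $Z_G(x,Y)=\sum_{c_V}(1+Y)^{|M(c_V)|}$ --- essentially the classical equivalence between the random-cluster and Potts-coloring expansions. The paper instead stays inside its Hopf-algebraic framework: it starts from $\zeta_y=\phi_{chr}\leftsquigarrow \mu_y$, which yields $Z_G(x,y)=\sum_{\sim\in\eq_c[G]}\phi_{chr}(G/\sim)(x)\,(1+y)^{|E(G\mid\sim)|}$, and then constructs a bijection between compatible pairs $(c_V,c_E)$ and triples consisting of an equivalence $\sim\in\eq_c[G]$, a proper coloring of the contraction $G/\sim$, and an edge coloring of the restriction $G\mid\sim$. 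Your route is shorter, self-contained, and avoids the slightly delicate passage from the relation $\sim'_f$ to its connected refinement $\sim_f$ that the paper needs; what the paper's route buys is a demonstration of how the interpretation falls out of the coaction $\leftsquigarrow$ and the contraction--extraction coproduct, which is the organizing theme of the article. Both proofs handle the sign in the second identity the same way, by noting that $c_E^{-1}([y-1])$ is exactly the set of monochromatic edges, and your sanity checks at $y=-1$ and $y=1$ correctly recover $\phi_{chr}(G)$ in agreement with Proposition \ref{propZchromatique}.
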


\begin{proof}
As $\zeta_y=\phi_{chr}\leftsquigarrow \mu_y$, for any graph $G$,
\begin{align*}
Z_G(x,y)&=\sum_{\sim\in \eq_c[G]} \phi_{chr}(G/\sim)(x)(1+y)^{|E(G\mid \sim)|}\\
&=\sum_{\sim\in \eq_c[G]} |\{\mbox{proper $x$-colorings of $G/\sim$}\}|(1+y)^{|E(G\mid \sim)|}.
\end{align*}
Let $f=(c_V,c_E)$ be a compatible $(x,z)$-pair of colorings of $G$, with $z\in \N$.
We define an equivalence $\sim'_f$ on $V(G)$ by
\[v\sim'_f w \Longleftrightarrow c_V(v)=c_V(w).\]
This has no reason to be in $\eq_c[G]$: we now define $\sim_f$ as the equivalence whose classes are the connected components
of $G\mid \sim'_f$. Then $\sim_f\in \eq_c[G]$. As $c_V$ is constant on the classes of $\sim_f$, 
it induces a (vertex) coloring of $G/\sim_f$. This coloring is a proper vertex--coloring of $G$: if $\{C,D\}$ is an edge of $G/\sim_f$,
there exists an edge $\{v,w\}$ in $E(G)$, with $v\in C$ and $w\in D$. 
By absurd, if $c_V(v)=c_V(w)$, then $v,w$ are in the same connected component of $G\mid \sim_f$ as they are related by an edge,
so $v\sim_f w$: a contradiction, $C\neq D$. Hence, $c_V(v)\neq c_V(w)$. 

Conversely, if $\sim\in \eq_c[G]$ and $f$ is a proper $x$-coloring of $G/\sim$, $f$ can be extended to 
a map of all vertices $c_C:V(G)\longrightarrow [x]$, assigning to each vertex in a class of $\sim$ the color it has in the coloring of $G/\sim$. By the condition 
%on $c_E$, 
of compatibility  for a pair of (vertex, edge)-coloration, there exist exactly $z^{|E(G\mid\sim)|}$
maps $c_E$ completing $c_C$ to a compatible $(x,1+y)$ pair of colorings. 
Hence, $Z_G(x,y)$ is the number of compatible $(x,y+1)$-pairs of colorings of $G$. 

Similarly, if $x,y\in \N$,
\begin{align*}
Z_G(x,-y)&=\sum_{\sim\in \eq_c[G]} \phi_{chr}(G/\sim)(x)(1-y)^{|E(G\mid \sim)|}\\
&=\sum_{\sim\in \eq_c[G]} |(-1)^{E(G\mid \sim)}\{\mbox{proper $x$-coloring of $G/\sim$}\}|(y-1)^{|E(G\mid \sim)|}\\
&=\sum_{(c_V,c_E)\in \pc_{x,y-1}(G)} (-1)^{|c_E^{-1}([y-1])|}. \qedhere
\end{align*}
\end{proof}

From (\ref{EQ2}), it follows:

\begin{cor}
Let $G\in \bfG$. For any $x,y\geq 2$,
\[T_G(x,y)=\frac{1}{(x-1)^{\cc(G)}(y-1)^{|V(G)|}}|\pc_{(x-1)(y-1),y}(G)|.\]
For any $x,y\geq 0$,
\[T_G(-x,-y)=\frac{(-1)^{\cc(G)+|V(G)|}}{(x+1)^{\cc(G)}(y+1)^{|V(G)|}}
\sum_{(c_V,c_E)\in \pc_{(1+x)(1+y),y}(G)} (-1)^{|c_E^{-1}([y])|}.\]
\end{cor}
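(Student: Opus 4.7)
The corollary is a direct specialization of the previous proposition via the relation (\ref{EQ1}) between the Tutte polynomial and the Fortuin--Kasteleyn polynomial, so my plan is essentially to substitute and chase the exponents, being careful to verify that the arguments fed into the previous proposition lie in the ranges where its combinatorial interpretation is valid.

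For the first identity, I start from (\ref{EQ1}),
\[T_G(x,y)=(x-1)^{-\cc(G)}(y-1)^{-|V(G)|}Z_G((x-1)(y-1),y-1),\]
valid whenever the denominators do not vanish. Under the hypothesis $x,y\geq 2$, I note that $(x-1)(y-1)\in \N$ and $y-1\in \N\subseteq \Z_{\geq -1}$, so the first part of the previous proposition applies with the parameters $x'=(x-1)(y-1)$ and $y'=y-1$, yielding
\[Z_G((x-1)(y-1),y-1)=|\pc_{(x-1)(y-1),\,y}(G)|.\]
Plugging this back in gives the first formula with no further manipulation.

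For the second identity, I again start from (\ref{EQ1}), substituting $X=-x$ and $Y=-y$ with $x,y\geq 0$. Then $X-1=-(x+1)$ and $Y-1=-(y+1)$, and factoring out the sign I obtain
\[T_G(-x,-y)=\frac{(-1)^{\cc(G)+|V(G)|}}{(x+1)^{\cc(G)}(y+1)^{|V(G)|}}\,Z_G\bigl((x+1)(y+1),\,-(y+1)\bigr).\]
Now $(x+1)(y+1)\in \N$ and $y+1\in \N_{\geq 1}$, so the second part of the previous proposition applies with parameters $x'=(x+1)(y+1)$ and $y'=y+1$, giving
\[Z_G\bigl((x+1)(y+1),\,-(y+1)\bigr)=\sum_{(c_V,c_E)\in \pc_{(x+1)(y+1),\,y}(G)}(-1)^{|c_E^{-1}([y])|}.\]
Substituting yields the second formula.

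There is essentially no obstacle beyond bookkeeping: the only thing to watch is the range of validity for the previous proposition (namely $y\geq -1$ in the positive case and $y\geq 1$ in the negative case), which is exactly what forces the hypothesis $x,y\geq 2$ in the first formula and $x,y\geq 0$ in the second. The shift by $\pm 1$ in the second argument of $\pc$ that appears in the statement of the corollary matches precisely the shift built into the proposition's indexing, so no further combinatorial argument is needed.
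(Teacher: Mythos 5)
Your proof is correct and is exactly the argument the paper intends: the paper gives no written proof beyond ``From (\ref{EQ2}), it follows,'' and your substitution of $X=x,Y=y$ (resp.\ $X=-x,Y=-y$) into the Tutte/Fortuin--Kasteleyn relation, followed by an application of the preceding proposition with the shifted parameters, is precisely that deduction (using the relation in the form (\ref{EQ1}) rather than (\ref{EQ2}) is immaterial, as the two are equivalent). The sign and exponent bookkeeping checks out.
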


\section{Orientations}\label{sec:Orientations}

\subsection{Orientations of graphs as a  Hopf algebra morphism}

\begin{notation}
Let $G\in \bfG$. We denote by $\ori(G)$ the set of orientations of $G$ and by $\oriac(G)$ the set of acyclic orientations
of $G$.  By definitions, $\ori(G)$ contains $2^{|E(G)|}$ oriented graphs.
\end{notation}

\begin{prop}
The following map is a bialgebra morphism:
\[\Theta:\left\{\begin{array}{rcl}
(\calH_\bfG,m,\Delta)&\longrightarrow&(\calH_{\bfG_o},m,\Delta)\\
G\in \bfG&\longrightarrow&\displaystyle \sum_{H\in \ori(G)} H.
\end{array}\right.\]
\end{prop}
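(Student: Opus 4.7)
The plan is to check separately that $\Theta$ respects the product/unit and the coproduct/counit. The product and counit compatibilities are essentially bookkeeping, while the coproduct compatibility is the heart of the matter and will be obtained by constructing an explicit bijection.

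First I would verify that $\Theta$ is an algebra map. If $G_1,G_2\in\bfG$ are disjoint, then $E(G_1G_2)=E(G_1)\sqcup E(G_2)$ and there is no edge between $V(G_1)$ and $V(G_2)$, so orienting $G_1G_2$ amounts to orienting $G_1$ and $G_2$ independently. This yields the bijection
\[\ori(G_1G_2)\longleftrightarrow \ori(G_1)\times\ori(G_2),\qquad H\longmapsto (H_{\mid V(G_1)},H_{\mid V(G_2)}),\]
and since the product in $\calH_{\bfG_o}$ is disjoint union, $\Theta(G_1G_2)=\Theta(G_1)\Theta(G_2)$. The empty graph has a unique (empty) orientation, giving $\Theta(1)=1$. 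The counit compatibility is immediate: $\varepsilon_\Delta(H)=\delta_{H,1}$ on both sides, and the only orientation counted by $\Theta(G)$ that has empty vertex set is the empty one, attained precisely when $G=1$.

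The key step is to prove $\Delta\circ \Theta=(\Theta\otimes\Theta)\circ\Delta$. Expanding both sides on a graph $G\in\bfG$, one has
\begin{align*}
\Delta(\Theta(G))&=\sum_{H\in\ori(G)}\ \sum_{I\text{ ideal of }H} H_{\mid I}\otimes H_{\mid V(G)\setminus I},\\
(\Theta\otimes\Theta)\Delta(G)&=\sum_{I\subseteq V(G)}\ \sum_{K_1\in\ori(G_{\mid I})}\sum_{K_2\in\ori(G_{\mid V(G)\setminus I})} K_1\otimes K_2.
\end{align*}
I would match these term by term via the following bijection. Given $I\subseteq V(G)$, $K_1\in\ori(G_{\mid I})$, and $K_2\in\ori(G_{\mid V(G)\setminus I})$, there is a \emph{unique} orientation $H$ of $G$ such that $I$ is an ideal of $H$ with $H_{\mid I}=K_1$ and $H_{\mid V(G)\setminus I}=K_2$: indeed, the edges of $G$ split into three families (those inside $I$, those inside $V(G)\setminus I$, and the boundary edges $\{x,y\}$ with $x\in I$, $y\notin I$), and the ideal condition forces every boundary edge to be oriented as $(y,x)$, from outside $I$ into $I$, since orienting it $(x,y)$ with $x\in I$ would violate the definition of an ideal. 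Conversely, any pair $(H,I)$ with $H\in\ori(G)$ and $I$ an ideal of $H$ yields the triple $(I,H_{\mid I},H_{\mid V(G)\setminus I})$, recovering the data on the other side.

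The main (and essentially only) obstacle is precisely this bijection: one must check that under it the two summands $H_{\mid I}\otimes H_{\mid V(G)\setminus I}$ and $K_1\otimes K_2$ coincide, which is automatic from the construction, and that the ideal axiom is \emph{exactly} the constraint which determines the boundary arcs uniquely (neither more nor less). Once this is in place, the two sums above are equal tensor by tensor, proving the coproduct compatibility, and hence the bialgebra morphism property.
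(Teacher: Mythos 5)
Your proof is correct and follows essentially the same route as the paper: multiplicativity via the obvious product decomposition of orientations, and coproduct compatibility via the bijection between pairs (orientation $H$, ideal $I$ of $H$) and triples (subset $I$, orientation of $G_{\mid I}$, orientation of $G_{\mid V(G)\setminus I}$), with the key observation that the ideal condition forces each boundary edge to be oriented into the ideal. The only cosmetic difference is that you establish the bijection by exhibiting the unique preimage directly, whereas the paper proves injectivity and surjectivity of the forward map separately.
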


\begin{proof}
Let $G,G'\in \bfG$. Then
\[\ori(GG')=\{HH'\mid H\in \ori(G),\: H'\in \ori(H')\}.\]
Hence, $\Theta(GG')=\Theta(G)\Theta(G')$. \\

Let $G\in \bfG$. We consider
\begin{align*}
\calA&=\{(H,I)\mid H\in \ori(G),\mbox{ $I$ ideal of $H$}\},\\
\calB&=\{(J,H',H'')\mid J\subseteq V(G),\: H'\in \ori(G_{\mid  V(G)\setminus J}),\:H''\in \ori(G_{\mid J})\}.
\end{align*}
There exists an obvious map
\[\upsilon:\left\{\begin{array}{rcl}
\calA&\longrightarrow&\calB\\
(H,I)&\longrightarrow&(I,H_{\mid V(G)\setminus I},H_{\mid I}).
\end{array}\right.\]
Let us prove that $\upsilon$ is injective. Let us assume that $\upsilon(H_1,I_I)=\upsilon(H_2,I_2)$. 
Then $I_1=I_2=J$. Let $e\in A(H)$. 
\begin{itemize}
\item If both extremities of $e$ are elements of $J$,
 as $(H_1)_{\mid J}=(H_2)_{\mid J}$, $e$ is oriented in the same way in $H_1$ and in $H_2$.
\item If both extremities of $e$ are not elements of $J$, as $(H_1)_{\mid V(G)\setminus J}=(H_2)_{\mid V(G)\setminus J}$,
 $e$ is oriented in the same way in $H_1$ and in $H_2$.
\item Otherwise, let us denote by $x$ the extremity of $e$ which is in $V(G)\setminus J$
and $y$ the extremity of $e$ which is in $J$. As $J$ is an ideal of $H_1$ and of $H_2$, necessarily
$e$ is oriented in $H_1$ and in $H_2$ from $x$ to $y$.
\end{itemize}
Therefore, $H_1=H_2$.

Let us prove that $\upsilon$ is surjective. Let $(J,H',H'')\in \calB$. We define an orientation $H$ of $G$ as follows:
if $e\in E(G)$,
\begin{itemize}
\item If both extremities of $e$ are elements of $J$,
% as $(H_1)_{\mid J}=(H_2)_{\mid J}$, 
then choose for $e$ the same orientation as in $H''$. 
\item If both extremities of $e$ are not elements of $J$, then choose for $e$ the same orientation as in $H'$.
\item Otherwise, let us denote by $x$ the extremity of $e$ which is in $V(G)\setminus J$
and $y$ the extremity of $e$ which is in $J$. Then orient $e$  from $x$ to $y$:  $(x,y)$. 
\end{itemize}
We obtain $H\in \ori(G)$, such that $H_{\mid V(G)\setminus J}=H'$ and $H_{\mid J}=H''$.
Moreover, by construction there is no arc in $H$ from a vertex belonging to $J$ to a vertex not belonging to $J$,
so $J$ is an ideal of $H$. Therefore, $(H,J)\in \calA$ and $\upsilon(H,J)=(J,H',H'')$. 

Using this bijection,
\begin{align*}
\Delta\circ \Theta(G)&=\sum_{(H,J)\in \calA} H_{\mid V(G)\setminus J}\otimes H_{\mid J}\\
&=\sum_{(J,H',H'')\in \calB} H'\otimes H''\\
&=\sum_{J\subseteq V(G)} \Theta(G_{\mid V(G)\setminus J})\otimes \Theta(G_{\mid J})\\
&=(\Theta\otimes \Theta)\circ \Delta(G). 
\end{align*}
So $\Theta$ is a bialgebra morphism.
\end{proof}

\begin{remark}
The map $\Theta$ is not compatible with $\delta$. For example,
\begin{align*}
\Theta(\grdeux)&=2\grdeuxo,&\Theta(\grtrois)&=2\grtroisoun+6\grtroisodeux
\end{align*}
so
\begin{align*}
(\Theta\otimes \Theta)\circ \delta(\grtrois)
&=(\Theta\otimes \Theta)(\grtrois\otimes \grun\grun\grun+\grun \otimes \grtrois+3\grdeux\otimes \grdeux\grun)\\
&=\Theta(\grtrois)\otimes \grun\grun\grun+\grun \otimes \Theta(\grtrois)+12\grdeuxo\otimes \grdeuxo\grun,\\
\delta\circ \Theta(\grtrois)&=2\delta(\grtroisoun)+6\delta(\grtroisodeux)\\
&=\Theta(\grtrois)\otimes \grun\grun\grun+\grun \otimes \Theta(\grtrois)+12\grdeuxo\otimes \grdeuxo\grun
+12 \cycle\otimes \grdeuxo\grun.
\end{align*}
\end{remark}

We denote by $I_{oc}$ the space of $\calH_{\bfG_o}$ generated by oriented graphs containing an oriented cycle. 
If $G$ is such a graph:
\begin{itemize}
\item For any oriented graph $H$, $GH$ has an oriented cycle. In other terms, $I_{oc}$ is an ideal.
\item Let $I$ be an ideal of $G$. If $I$ contains a vertex of the oriented cycle of $G$, then it contains all the vertices of the cycle,
as it is an ideal. Therefore, $G_{\mid I}$ or $G_{\mid V(G)\setminus I}$ has an oriented cycle. In other words,
$I_{oc}$ is a coideal for $G$.
\item Let $\sim\in \eq_c[G]$. If all the vertices of $G$ are $\sim$-equivalent, then $G\mid \sim$ has an oriented cycle.
Otherwise, the contraction $G/\sim$ has an oriented cycle. Moreover, $\epsilon_\delta(G)=0$, as $A(G)\neq \emptyset$,
since it has an oriented cycle. In other terms, $I_{oc}$ is a coideal for $\delta$.
\end{itemize}
As a consequence, the quotient $\calH_{\bfG_o}/I_{oc}$ which we identify with the space $\calH_{\bfG_{aco}}$
of oriented acyclic graphs,  inherits a double bialgebra structure such that the following map is a double bialgebra morphism:
\[\pi:\left\{\begin{array}{rcl}
\calH_{\bfG_o}&\longrightarrow&\calH_{\bfG_{aco}}\\
G&\longrightarrow&\begin{cases}
G\mbox{ if $G$ is acyclic},\\
0\mbox{ otherwise}.
\end{cases}
\end{array}\right.\]

\begin{prop}\cite[Theorem 1.7]{Foissy36}
The following is a double bialgebra morphism:
\[\Theta_{ac}=\pi\circ \Theta:\left\{\begin{array}{rcl}
\calH_\bfG&\longrightarrow&\calH_{\bfG_{aco}}\\
G&\longrightarrow&\displaystyle \sum_{H\in \oriac(G)}H.
\end{array}\right.\]
\end{prop}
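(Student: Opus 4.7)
The plan is as follows. Since $\Theta$ is already known to be a bialgebra morphism with respect to $\Delta$ and $\pi$ is declared a double bialgebra morphism, the composition $\Theta_{ac} = \pi \circ \Theta$ is automatically a bialgebra morphism for $\Delta$. The only substantive content is therefore the compatibility with the second coproduct $\delta$, that is, the identity $\delta \circ \Theta_{ac} = (\Theta_{ac} \otimes \Theta_{ac}) \circ \delta$.

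Fix $G \in \bfG$ and expand both sides. The left-hand side unfolds as
\[
\delta\bigl(\Theta_{ac}(G)\bigr) = \sum_{H \in \oriac(G)}\;\sum_{\sim\,\in\, \eq_c(G)} \pi(H/\sim) \otimes (H\mid\sim),
\]
in which $H\mid\sim$ is automatically acyclic as a sub-oriented-graph of $H$, whereas $\pi(H/\sim)$ vanishes whenever $H/\sim$ contains an oriented cycle; this can occur even for acyclic $H$, since contracting classes can create new cycles between them. The right-hand side unfolds as
\[
(\Theta_{ac} \otimes \Theta_{ac})\bigl(\delta(G)\bigr) = \sum_{\sim\,\in\, \eq_c(G)}\;\sum_{\substack{H_1 \in \oriac(G/\sim)\\ H_2 \in \oriac(G\mid\sim)}} H_1 \otimes H_2.
\]
I would match these by constructing a bijection between the two index sets. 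The forward map sends a pair $(H,\sim)$ with $H$ and $H/\sim$ both acyclic to $(\sim, H/\sim, H\mid\sim)$. The inverse reconstructs $H$ from a triple $(\sim, H_1, H_2)$ by orienting each within-class edge of $G$ as in $H_2$, and each between-class edge of $G$ as in the unique corresponding edge of $G/\sim$ in $H_1$ (well-defined because $G/\sim$ is a simple graph).

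The main and essentially only nontrivial step is to verify that the reconstructed $H$ is acyclic whenever $H_1$ and $H_2$ are. I plan to argue by contradiction. Given a directed cycle $x_0 \to x_1 \to \cdots \to x_n = x_0$ in $H$, I would project to the sequence of $\sim$-classes $\pi_\sim(x_0), \ldots, \pi_\sim(x_n)$ and suppress consecutive repetitions to obtain a closed sequence $D_0, D_1, \ldots, D_m = D_0$ with $D_i \neq D_{i+1}$. If $m = 0$, the cycle lies inside a single class and hence inside $H\mid\sim = H_2$, contradicting the acyclicity of $H_2$. If $m \geq 1$, every transition $D_{i-1}\to D_i$ is by construction an arc of $H_1$; the case $m=1$ is excluded because oriented graphs carry no loops, so $m \geq 2$ and the sequence is a directed cycle in $H_1$, contradicting the acyclicity of $H_1$. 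Together with the immediate verification that the two maps are mutually inverse, using that $H_1$ acyclic forces all $G$-edges between two distinct classes to be oriented consistently in $H$ (else $H_1$ would already contain a $2$-cycle), this yields the bijection and hence the desired coproduct compatibility.
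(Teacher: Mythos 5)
Your proposal is correct, and it actually supplies an argument where the paper gives none: the paper states this proposition with only a citation to \cite[Theorem 1.7]{Foissy36} and omits the proof, so there is no in-text argument to compare against. Your reduction is the natural one given the surrounding setup: compatibility with $\Delta$ is free from the preceding proposition together with the fact that $\pi$ is a double bialgebra morphism, so everything rests on $\delta\circ\Theta_{ac}=(\Theta_{ac}\otimes\Theta_{ac})\circ\delta$, and your bijection $(H,\sim)\leftrightarrow(\sim,H/\sim,H\mid\sim)$, restricted on the left to pairs with $H/\sim$ acyclic (exactly the terms surviving $\pi$), is the right matching. Two small points worth making explicit if you write this up: first, when $H/\sim$ is acyclic it is automatically an \emph{orientation} of $G/\sim$ (one arc per edge), because two parallel $G$-edges between distinct classes oriented inconsistently in $H$ would produce a $2$-cycle in $H/\sim$ --- you only mention this in passing when checking the maps are mutually inverse, but it is also needed for the forward map to land in $\oriac(G/\sim)$ at all; second, the $m=1$ case in your cycle-projection argument is already excluded by the suppression of consecutive repetitions together with $D_m=D_0$, so the appeal to the absence of loops is not needed (and more robustly, any closed directed walk containing at least one arc contradicts acyclicity, since an acyclic digraph admits a topological order). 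Neither point is a gap; the proof is complete. It is also consistent with the paper's remark that $\Theta$ itself fails to be $\delta$-compatible: the failure is concentrated precisely in the orientations $H$ for which some $H/\sim$ acquires a cycle, and these are exactly the terms your restriction discards.
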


\begin{defi}
Let $G$ be a graph and $H$ be an orientation of $G$. We shall say that $H$ is strongly connected if for any $x,y\in V(G)$, 
there exists an oriented path from $x$ to $y$ in $H$. 
The set of orientations of $G$ such that any connected component of $G$ is strongly connected is denoted by $\osc(G)$. 
\end{defi}

\begin{lemma}
Let us consider the characters of $\calH_{\bfG_o}$ defined by
\begin{align*}
\mu_1&:\left\{\begin{array}{rcl}
\calH_{\bfG_o}&\longrightarrow&\K\\
G\in \bfG_o&\longrightarrow&1,
\end{array}\right.&
\mu_{sc}&:\left\{\begin{array}{rcl}
\calH_{\bfG_o}&\longrightarrow&\K\\
G\in \bfG_o&\longrightarrow&\begin{cases}
(-1)^{\cc(G)}\mbox{ if any connected component of $G$}\\
\hspace{1cm}\mbox{is strongly connected},\\
0\mbox{ otherwise}.
\end{cases}
\end{array}\right.
\end{align*}
Denoting $*$ the convolution associated to $\Delta$, then $\mu_{sc}=\mu_1^{*-1}$.
\end{lemma}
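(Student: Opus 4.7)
The plan is to verify directly that $\mu_1*\mu_{sc}=\varepsilon_\Delta$; since $(\calH_{\bfG_o},m,\Delta)$ is a graded connected bialgebra, hence a Hopf algebra, $\mu_1$ admits a unique convolution inverse, and this equality will identify it as $\mu_{sc}$. Unfolding the coproduct $\Delta$, for any oriented graph $G$,
\begin{align*}
(\mu_1*\mu_{sc})(G)=\sum_{I\text{ ideal of }G}\mu_{sc}(G_{\mid V(G)\setminus I})=\sum_{J}\mu_{sc}(G_{\mid J}),
\end{align*}
where the last sum runs over subsets $J\subseteq V(G)$ whose complement is an ideal, that is, over backward-closed sets satisfying $(x,y)\in A(G)$ and $y\in J\Rightarrow x\in J$. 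The goal is to show this sum equals $\delta_{G,1}$.

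The main reduction is to the condensation $\bar G$ of $G$, the directed acyclic graph on the set of strongly connected components (SCCs) of $G$. Since each SCC is strongly connected, every ideal (respectively, every backward-closed subset) of $G$ is a union of whole SCCs; this yields a bijection between backward-closed subsets $J$ of $G$ and backward-closed subsets $\bar J$ of $\bar G$. The structural claim I then aim to prove is that $\mu_{sc}(G_{\mid J})\neq 0$ if and only if $\bar G_{\mid \bar J}$ has no arcs. The nontrivial direction uses that an arc of $\bar G$ from $A\in\bar J$ to $B\in\bar J$ (with $A\neq B$) places $A\cup B$ in a single undirected connected component of $G_{\mid J}$, yet the acyclicity of $\bar G$ forbids any path from $B$ back to $A$ inside $\bar J$, so this connected component cannot be strongly connected.

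A backward-closed subset $\bar J$ of $\bar G$ with no internal arcs must consist only of source SCCs of $\bar G$: if some $A\in\bar J$ had an incoming arc in $\bar G$, backward-closure would force its tail into $\bar J$, creating an internal arc; conversely, any subset of source SCCs is automatically backward-closed and internally arcless. When this holds, the undirected connected components of $G_{\mid J}$ are precisely the SCCs listed in $\bar J$, each strongly connected, whence $\mu_{sc}(G_{\mid J})=(-1)^{|\bar J|}$. Summing over such $\bar J$,
\begin{align*}
(\mu_1*\mu_{sc})(G)=\sum_{\bar J\subseteq\{\text{source SCCs of }\bar G\}}(-1)^{|\bar J|}=(1-1)^{s(G)},
\end{align*}
where $s(G)$ is the number of source SCCs of $G$. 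Since every nonempty DAG has at least one source, $s(G)\geq 1$ whenever $G\neq 1$, giving $0$; and $s(1)=0$ yields $1$. This matches $\varepsilon_\Delta(G)$ in both cases.

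The main obstacle is the structural characterization of when $\mu_{sc}(G_{\mid J})\neq 0$ through the arclessness of the induced condensation; once that lemma is in place, the identity collapses to a binomial cancellation over subsets of source SCCs.
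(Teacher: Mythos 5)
Your proof is correct and follows essentially the same route as the paper's: reduce to the condensation into strongly connected components, show that the only contributing subsets are unions of extremal SCCs of the condensation, and conclude by the binomial cancellation $(1-1)^{s}=0$ with $s\geq 1$ for $G\neq 1$. The only (cosmetic) difference is that you evaluate $\mu_{sc}$ on the complement of the ideal and hence work with source SCCs, whereas the paper works with the ideals themselves and the sinks (``wells'' of $G/\sim$); your indexing is in fact the one literally matching the stated order of the convolution $\mu_1*\mu_{sc}=(\mu_1\otimes\mu_{sc})\circ\Delta$.
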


\begin{proof}
Let $G\in \bfG_o$, different from $1$. We denote by  $I_1,\ldots,I_k$ its strongly connected components
and by $\sim$ the equivalence on $V(G)$ whose classes are $I_1,\ldots,I_k$.
For any $i$, $G_{\mid I_i}$ is strongly connected, so is connected: $\sim\in \eq_c[G]$. 

Let us prove that $G/\sim$ is acyclic. If $(I_{j_1},\ldots,I_{j_p})$ is an oriented cycle in $G/\sim$,
then for any $k$, there exists an arc from a vertex of $I_{j_k}$ to $I_{j_{k+1}}$, with the convention $j_{p+1}=j_1$.
As $G_{\mid J_l}$ is strongly connected for any $l$, we deduce the existence of an oriented cycle in $G$, 
which goes through $I_1,\ldots,I_p$. All the vertices of this cycle are in the same strongly connected component of $G$,
so belong to the same $I_k$: we deduce that $I_{j_1}=\ldots=I_{j_p}$, $(I_{j_1},\ldots,I_{j_p})$ is an oriented cycle in $G/\sim$:
this is a contradiction. So $G/\sim$ is acyclic. 

As $G/\sim$ is acyclic, it has wells, that is to say vertices with no outgoing arc. We denote by
$J_1,\ldots,J_m$ its wells. These elements $J_1,\ldots,J_m$ of $V(G/\sim)$ are equivalent classes of $\sim$, that is to say some of the $I_i$'s: $\{J_1,\ldots,J_m\}\subseteq \{I_1,\ldots,I_k\}$. \\

Let $J\subseteq V(G)$. Let us show that it is an ideal of $G$ such that $\mu_{sc}(G_{\mid I})\neq 0$
if, and only if, $J$ is a disjoint union of $J_j$'s. 

$\Longrightarrow$. If so, the connected components of $G_{\mid J}$ are strongly connected.
Let us assume that its intersection with $I_i$ contains a vertex $x$. Let $x\in I_i$. 
As $G_{\mid I_i}$ is strongly connected, $G$ contains an oriented path from $x$ to $y$.
Since $J$ is an ideal, $y\in J$. Therefore, $G$ is a disjoint union of $I_i$'s. Let us assume that one of the $I_i$'s included in $G$
is not a well of $G/\sim$. There exists $j\neq i$, such that $(I_i,I_j)$ is an arc in $G/\sim$. 
Therefore, there exists an arc between a vertex $x\in I_i$ and a vertex $y\in I_j$ in $G$.
As $x\in J$ and $J$ is an ideal, $y\in J$.  The connected components of $G_{\mid J}$ being strongly connected, it follows that  there exists an oriented path from $y$ to $x$ in $G_{\mid J}$, so $x$ and $y$ are in the same strongly connected component
of $G$ and finally $i=j$: this is a contradiction. So $I_i$ is a well of $G/\sim$, so is one of the $J_j$'s.

$\Longleftarrow$. Let $J$ be a disjoint union of $J_j$'s. Let $x\in J$ and $y\in V(G)$, with an arc between $x$ and $y$, and $j$ such that $x\in J_j$. If $y\notin J_j$, there is an arc in $G/\sim$ from $J_j$ to another vertex:
this contradicts the fact that $J_j$ is a well. So $y\in J_j\subseteq J$, and $J$ is an ideal of $G$.
Its connected components are obviously the $I_i$'s it contains, so they are strongly connected:
$\mu_{sc}(G_{\mid J})\neq 0$. \\

Then,
\begin{align*}
\mu_1*\mu_{sc}(G)&=\sum_{\mbox{\scriptsize $I$ ideal of $G$}} \mu_{sc}(G_{\mid I})
=\sum_{I'\subseteq [m]} \mu_{sc}\left(G_{\displaystyle \mid \bigcup_{i\in I'} J_i}\right)
=\sum_{I'\subseteq [m]}(-1)^{|I'|}
=0,
\end{align*}
as $m\geq 1$. Therefore, $\mu_1*\mu_{sc}=\varepsilon_\Delta$, i.e. $\mu_{sc}=\mu_1^{*-1}$.  
\end{proof}

\begin{prop}
We consider the character $\alpha$ defined by
\begin{align*}
\alpha=\mu_1^{*-1}&:\left\{\begin{array}{rcl}
\calH_\bfG&\longrightarrow&\K\\
G\in \bfG&\longrightarrow&(-1)^{\cc(G)}|\osc(G)|.
\end{array}\right.
\end{align*}
Then $\alpha$ is the inverse for the convolution associated to $\Delta$ of the character $\mu_1$ of $\calH_\bfG$.
\end{prop}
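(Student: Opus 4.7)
The plan is to transport the identity $\mu_{sc}=\mu_1^{*-1}$ already established on $\calH_{\bfG_o}$ back to $\calH_\bfG$ via the bialgebra morphism $\Theta$.

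First, I would record a general naturality fact: if $f:A\to B$ is a bialgebra morphism and $\lambda,\nu$ are characters of $B$, then $(\lambda *\nu)\circ f=(\lambda\circ f)*(\nu\circ f)$, since
\[((\lambda\circ f)*(\nu\circ f))(a)=(\lambda\otimes \nu)\circ(f\otimes f)\circ \Delta_A(a)=(\lambda\otimes \nu)\circ \Delta_B\circ f(a).\]
In particular, precomposition with $f$ sends convolution inverses to convolution inverses: if $\lambda^{*-1}$ exists in $\chara(B)$, then $(\lambda\circ f)^{*-1}=\lambda^{*-1}\circ f$.

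Next I would identify $\mu_1\in \chara(\calH_\bfG)$ (the character sending $G$ to $2^{|E(G)|}$, as defined previously via $\mu_y(G)=(1+y)^{|E(G)|}$ specialized at $y=1$) with the pullback $\mu_1^{\bfG_o}\circ \Theta$, where $\mu_1^{\bfG_o}$ denotes the character of $\calH_{\bfG_o}$ constant to $1$ on every oriented graph. Indeed,
\[(\mu_1^{\bfG_o}\circ \Theta)(G)=\sum_{H\in \ori(G)} 1 =2^{|E(G)|}=\mu_1(G).\]
Applying the naturality statement to $\Theta:\calH_\bfG\to \calH_{\bfG_o}$ and the previous lemma, I obtain
\[\mu_1^{*-1}=(\mu_1^{\bfG_o}\circ \Theta)^{*-1}=\mu_1^{\bfG_o\,*-1}\circ \Theta=\mu_{sc}\circ \Theta.\]

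Finally I would evaluate the right-hand side on a graph $G\in\bfG$:
\[\mu_{sc}\circ \Theta(G)=\sum_{H\in \ori(G)}\mu_{sc}(H).\]
Only the orientations $H$ of $G$ each of whose connected components is strongly connected contribute, i.e. precisely those $H\in \osc(G)$. Moreover any orientation $H$ of $G$ has the same connected components as $G$, so $\cc(H)=\cc(G)$. Thus
\[\mu_1^{*-1}(G)=\sum_{H\in \osc(G)}(-1)^{\cc(H)}=(-1)^{\cc(G)}|\osc(G)|=\alpha(G),\]
which is exactly the claim. The argument has no real obstacle; the only point that deserves care is the naturality of convolution under bialgebra morphisms, and the observation that $\cc(H)=\cc(G)$ for every $H\in\ori(G)$ — both immediate.
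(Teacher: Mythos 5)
Your proposal is correct and follows essentially the same route as the paper: both identify $\mu_1=\mu_1\circ\Theta$ and $\alpha=\mu_{sc}\circ\Theta$, and then use the compatibility of $\Theta$ with $\Delta$ (your ``naturality of convolution'' lemma, which the paper applies inline as $\mu_1*\alpha=(\mu_1\otimes\mu_{sc})\circ\Delta\circ\Theta=\varepsilon_\Delta$) together with the preceding lemma $\mu_{sc}=\mu_1^{*-1}$ on $\calH_{\bfG_o}$. The only cosmetic difference is that you isolate the transport-of-inverses step as a general statement before applying it, whereas the paper verifies $\mu_1*\alpha=\varepsilon_\Delta$ directly.
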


\begin{proof}
For any graph $G\in\bfG$, 
\[\mu_1(G)=2^{|E(G)|}=|\ori(G)|=\mu_1\circ \Theta(G),\]
so $\mu_1=\mu_1\circ \Theta$. Moreover,
\[\alpha(G)=\sum_{H\in \ori(G)}\mu_{sc}(H)=\mu_{sc}\circ \Theta(G),\]
so $\alpha=\mu_{sc}\circ \Theta$. Consequently, since $\Theta$ is a Hopf algebra morphism:
\begin{align*}
\mu_1*\alpha&=(\mu_1\otimes \mu_{sc})\circ (\Theta\otimes \Theta)\circ \Delta
=(\mu_1\otimes \mu_{sc})\circ \Delta\circ \Theta
=\varepsilon_\Delta\circ \Theta=\varepsilon_\Delta,
\end{align*}
so $\alpha=\mu_1^{*-1}$. 
\end{proof}

\begin{prop}
Let $G\in \bfG$. Then $T_G(0,2)$ is the number of strongly connected orientations of $G$.
\end{prop}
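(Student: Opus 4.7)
The plan is to reduce the claim to the character identity $Z_G(-1,1)=\alpha(G)$ already prepared by the previous proposition, using only the conversion formula between $T_G$ and $Z_G$ and the fact that $\zeta_1$ is a Hopf algebra morphism.

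First, I would specialize the relation $T_G(X,Y)=(X-1)^{-\cc(G)}(Y-1)^{-|V(G)|}Z_G((X-1)(Y-1),Y-1)$ (from the definition of $T_G$ in terms of $Z_G$) at $X=0, Y=2$. Since $(X-1)(Y-1)=-1$ and $Y-1=1$, this gives
\[
T_G(0,2)=(-1)^{\cc(G)}\,Z_G(-1,1).
\]
Thus it suffices to show that $Z_G(-1,1)=(-1)^{\cc(G)}|\osc(G)|=\alpha(G)$.

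For this, I would exploit that $\zeta_1:(\calH_\bfG,m,\Delta)\to(\K[X],m,\Delta)$ is a Hopf algebra morphism (the theorem establishing that $\zeta_y$ is a Hopf algebra morphism for every $y\in\K$). Pre-composition with $\zeta_1$ therefore sends characters of $\K[X]$ to characters of $\calH_\bfG$ and is compatible with the $\Delta$-convolution products. A direct check on monomials shows that for any $a,b\in\K$ the evaluations satisfy $ev_a * ev_b = ev_{a+b}$; in particular $ev_1 * ev_{-1}=ev_0=\varepsilon_\Delta$, so $ev_{-1}=ev_1^{*-1}$ in the character monoid of $\K[X]$. Pulling this back through $\zeta_1$ yields
\[
ev_{-1}\circ\zeta_1=(ev_1\circ\zeta_1)^{*-1}
\]
in the character monoid of $\calH_\bfG$. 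But $(ev_1\circ\zeta_1)(G)=Z_G(1,1)=\sum_{F\subseteq E(G)}1=2^{|E(G)|}=\mu_1(G)$, so $ev_1\circ\zeta_1=\mu_1$, and therefore $ev_{-1}\circ\zeta_1=\mu_1^{*-1}=\alpha$.

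Combining the two steps: $Z_G(-1,1)=\alpha(G)=(-1)^{\cc(G)}|\osc(G)|$, so $T_G(0,2)=(-1)^{\cc(G)}\cdot(-1)^{\cc(G)}|\osc(G)|=|\osc(G)|$, which is exactly the number of orientations of $G$ all of whose connected components are strongly connected. There is no real obstacle here; the only subtle point is recognizing that $Z_G(-1,1)$ can be captured as the pullback of $ev_{-1}$ through the Hopf morphism $\zeta_1$, which immediately identifies it with the $\star$-inverse (in the $\Delta$-convolution sense) of the edge-counting character $\mu_1$ that the previous proposition already computed in terms of strongly connected orientations.
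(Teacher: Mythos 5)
Your proposal is correct and follows essentially the same route as the paper: both reduce the claim to $Z_G(-1,1)=\mu_1^{*-1}(G)=\alpha(G)$ using that $\zeta_1$ is a Hopf algebra morphism together with the preceding proposition and the conversion formula between $T_G$ and $Z_G$. The only cosmetic difference is that you realize $\mathrm{ev}_{-1}=\mathrm{ev}_1^{*-1}$ via $\mathrm{ev}_a*\mathrm{ev}_b=\mathrm{ev}_{a+b}$ and pull back along $\zeta_1$, whereas the paper writes the same fact through the antipode as $Z_G(-1,1)=\epsilon_\delta\circ\zeta_1\circ S(G)=\mu_1\circ S(G)$.
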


\begin{proof}
Note that for any $y\in \K$, as $\zeta_y$ is a Hopf algebra morphism,
\begin{align*}
Z_G(-X,y)&=S\circ \zeta_y(G)=\zeta_y\circ S(G),
\end{align*}
so 
\[Z_G(-1,y)=\zeta_y\circ S(G)(1)=\epsilon_\delta\circ \zeta_y\circ S(G)=\mu_y\circ S(G)=\mu_y^{*-1}(G).\]
In the particular case $y=1$,
\[Z_G(-1,1)=\mu_1^{*-1}(G)=\alpha(G).\]
By (\ref{EQ2}),
\[Z_G(-1,1)=(-1)^{\cc(G)}T_G(0,2),\]
which directly implies the result. 
\end{proof}

\subsection{Values at negative integers}

\begin{prop}
Let $x,y\in \N$. For any graph $G$, we denote by $\opc_{x,y}(G)$ the set of triples $(H,c_V,v_E)$ such that:
\begin{itemize}
\item $H$ is a totally acyclic partial orientation of $G$.
\item $c_V:V(G)\longrightarrow [x]$ and $c_E:E(H)\longrightarrow \{0,\ldots,y\}$ are maps such that for any
$\{v,w\} \in E(H)$, 
\[c_E(\{v,w\})\neq 0 \Longleftrightarrow c_V(v)=c_V(w).\]
\end{itemize}
Let $G\in \bfG$. For any $x\in \N$, for any $y\in \Z_{\geq-1}$,
\begin{align*}
Z_G(-x,y)&=\sum_{(H,c_V,c_E)\in \opc_{x,y+1}(G)} (-1)^{\cc(\gr_0(H))}.
\end{align*}
For any $x\in \N$, for any $y\in \N_{\geq 1}$,
\begin{align*}
Z_G(-x,-y)&=\sum_{(H,c_V,c_E)\in \opc_{x,y-1}(G)}  (-1)^{\cc(\gr_0(H))+|c_E^{-1}([y-1])|}.
\end{align*}
\end{prop}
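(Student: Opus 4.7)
The proof is essentially a two-step substitution, combining Proposition \ref{propantipodeZG} (which expresses $Z_G(-X,Y)$ as a signed sum of $Z_{\gr_0(H)}(X,Y)$ over totally acyclic partial orientations) with the combinatorial interpretation of $Z_K(x,y)$ and $Z_K(x,-y)$ as counts of compatible pairs of colorings of $K$, established in the previous proposition. The plan is to apply these two results one after the other and then to recognize the resulting sum as a sum over $\opc_{x,y\pm 1}(G)$.

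First I will observe the key indexing identity: since a triple $(H,c_V,c_E)\in\opc_{x,z}(G)$ consists of a totally acyclic partial orientation $H$ of $G$ together with a compatible $(x,z)$-pair of colorings of the (simple) graph $\gr_0(H)$ (noting that $V(\gr_0(H))=V(G)$ and $E(\gr_0(H))=E(H)$, and that the compatibility condition in $\opc_{x,z}(G)$ is literally the compatibility condition defining $\pc_{x,z}(\gr_0(H))$), we have the disjoint union decomposition
\[
\opc_{x,z}(G)=\bigsqcup_{H\in\potac(G)} \{H\}\times \pc_{x,z}(\gr_0(H)).
\]

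For the first formula, let $x\in\N$ and $y\in\Z_{\geq-1}$. Applying Proposition \ref{propantipodeZG} and then the interpretation $Z_K(x,y)=|\pc_{x,y+1}(K)|$ to each $K=\gr_0(H)$, I compute
\begin{align*}
Z_G(-x,y)
&=\sum_{H\in\potac(G)} (-1)^{\cc(\gr_0(H))}\, Z_{\gr_0(H)}(x,y)\\
&=\sum_{H\in\potac(G)} (-1)^{\cc(\gr_0(H))}\, |\pc_{x,y+1}(\gr_0(H))|\\
&=\sum_{(H,c_V,c_E)\in\opc_{x,y+1}(G)} (-1)^{\cc(\gr_0(H))},
\end{align*}
using the indexing identity in the last line.

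For the second formula, let $x\in\N$ and $y\in\N_{\geq 1}$. The same strategy works, now invoking the second identity of the previous proposition, namely $Z_K(x,-y)=\sum_{(c_V,c_E)\in\pc_{x,y-1}(K)}(-1)^{|c_E^{-1}([y-1])|}$ with $K=\gr_0(H)$:
\begin{align*}
Z_G(-x,-y)
&=\sum_{H\in\potac(G)} (-1)^{\cc(\gr_0(H))}\, Z_{\gr_0(H)}(x,-y)\\
&=\sum_{H\in\potac(G)} (-1)^{\cc(\gr_0(H))} \sum_{(c_V,c_E)\in\pc_{x,y-1}(\gr_0(H))} (-1)^{|c_E^{-1}([y-1])|}\\
&=\sum_{(H,c_V,c_E)\in\opc_{x,y-1}(G)} (-1)^{\cc(\gr_0(H))+|c_E^{-1}([y-1])|}.
\end{align*}

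There is no real obstacle here beyond bookkeeping; the two nontrivial ingredients are Proposition \ref{propantipodeZG} (which comes from the antipode of $(\calH_\bfG,m,\Delta)$ via Stanley's acyclic orientation theorem) and the previous proposition's colorings interpretation (which comes from $\zeta_y=\phi_{chr}\leftsquigarrow\mu_y$). The only point that needs a sentence of justification in the writeup is the set-theoretic identification of $\opc_{x,z}(G)$ with the disjoint union $\bigsqcup_{H\in\potac(G)}\pc_{x,z}(\gr_0(H))$, which is immediate from the definitions since specifying $H\in\potac(G)$ together with a compatible pair of colorings of $\gr_0(H)$ is the same data as a triple in $\opc_{x,z}(G)$.
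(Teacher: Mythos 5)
Your proposal is correct and follows essentially the same route as the paper's own proof: both apply Proposition \ref{propantipodeZG} to reduce $Z_G(-x,\pm y)$ to a signed sum of $Z_{\gr_0(H)}(x,\pm y)$ over $H\in\potac(G)$, then invoke the colorings interpretation of the previous proposition and regroup via the identification $\opc_{x,z}(G)=\bigsqcup_{H\in\potac(G)}\{H\}\times\pc_{x,z}(\gr_0(H))$. If anything, your write-up is slightly more careful than the paper's, which contains a typo ($Z_G(x,y)$ where $Z_{\gr_0(H)}(x,y)$ is meant) that you correctly avoid.
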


\begin{proof}
Note that 
\[\opc_{x,y}(G)=\{(H,c_V,c_E)\mid H\in P\ori(G),\: (c_V,c_E)\in \pc_{x,y}(\gr_0(H))\}.\]
Therefore, using Proposition \ref{propantipodeZG},
\begin{align*}
Z_G(-x,y)&=\sum_{H\in \potac(G)} (-1)^{\cc(\gr_0(H))}Z_G(x,y)\\
&=\sum_{\substack{H\in \potac(G),\\ (c_V,c_E)\in \pc_{x,y+1}(\gr_0(H))}}  (-1)^{\cc(\gr_0(H))}\\
&=\sum_{(H,c_V,c_E)\in \opc_{x,y+1}(G)} (-1)^{\cc(\gr_0(H))},
\end{align*}
whereas
\begin{align*}
Z_G(-x,-y)&=\sum_{H\in \potac(G)} (-1)^{\cc(\gr_0(H))}Z_G(x,-y)\\
&=\sum_{\substack{H\in \potac(G),\\ (c_V,c_E)\in \pc_{x,y-1}(\gr_0(H))}}  (-1)^{\cc(\gr_0(H))+|c_E^{-1}([y-1])|}\\
&=\sum_{(H,v_C,v_E)\in \opc_{x,y-1}(G)}  (-1)^{\cc(\gr_0(H))+|c_E^{-1}([y-1])|}. \qedhere
\end{align*}
\end{proof}

From (\ref{EQ2}):

\begin{cor}
Let $G\in \bfG$. For any $x\in \N$, $y\in \N_{\geq 2}$,
\[T_G(-x,y)=\frac{(-1)^{\cc(G)}}{(x+1)^{\cc(G)}(y-1)^{|V(G)|}}
\sum_{(H,c_V,c_E)\in \opc_{(x+1)(y-1),y}(G)} (-1)^{\cc(\gr_0(H))}.\]
For any $x\in \N_{\geq 2}$, $y\in \N$,
\[T_G(x,-y)=\frac{(-1)^{|V(G)|}}{(x-1)^{\cc(G)}(y+1)^{|V(G)|}}
\sum_{(H,c_V,c_E)\in \opc_{(x-1)(y+1),y}(G)}  (-1)^{\cc(\gr_0(H))+|c_E^{-1}([y])|}.\]
\end{cor}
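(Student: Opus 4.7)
The plan is to derive both identities by a direct substitution, using equation (\ref{EQ1}) to pass from the Tutte polynomial to the Fortuin--Kasteleyn polynomial, and then applying the previous proposition (which gives $Z_G(-x,y)$ and $Z_G(-x,-y)$ as signed sums over $\opc$) to the resulting negative arguments. Since the previous proposition is already available, this corollary is purely a matter of bookkeeping the signs introduced when pulling minus signs out of powers.

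For the first formula, I would take $T_G(-x,y)$ with $x\in\N$ and $y\in\N_{\geq 2}$, and use (\ref{EQ1}) with $X=-x$, $Y=y$. This yields
\[T_G(-x,y)=(-x-1)^{-\cc(G)}(y-1)^{-|V(G)|}Z_G\bigl(-(x+1)(y-1),\,y-1\bigr).\]
Pulling the factor $(-1)^{-\cc(G)}=(-1)^{\cc(G)}$ out front, I then apply the previous proposition to $Z_G(-X',Y')$ with $X'=(x+1)(y-1)\in\N$ and $Y'=y-1\in\Z_{\geq-1}$, obtaining a sum over $\opc_{(x+1)(y-1),\,y}(G)$ weighted by $(-1)^{\cc(\gr_0(H))}$. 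Combining these two observations gives the desired identity immediately.

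For the second formula, I proceed analogously. Starting from $T_G(x,-y)$ with $x\in\N_{\geq 2}$, $y\in\N$ and applying (\ref{EQ1}) with $X=x$, $Y=-y$:
\[T_G(x,-y)=(x-1)^{-\cc(G)}(-y-1)^{-|V(G)|}Z_G\bigl(-(x-1)(y+1),\,-(y+1)\bigr).\]
Now $(-y-1)^{-|V(G)|}=(-1)^{|V(G)|}(y+1)^{-|V(G)|}$, and the previous proposition with $X'=(x-1)(y+1)\in\N$ and $Y'=y+1\in\N_{\geq 1}$ rewrites $Z_G(-X',-Y')$ as a sum over $\opc_{(x-1)(y+1),\,y}(G)$ with the weight $(-1)^{\cc(\gr_0(H))+|c_E^{-1}([y])|}$. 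Putting these together gives the second claimed formula.

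No new combinatorial argument is needed, and there is no genuine obstacle: the only place where a mistake is easy to make is in tracking the signs and in verifying that the second argument of the $\opc$ set comes out correctly (namely $y'+1=y$ in the first case, and $y'-1=y$ in the second), which is why I would be explicit about the substitution of $Y-1$ or $-(Y+1)$ for the Fortuin--Kasteleyn variable. Once the substitutions are made, the identities follow from the previous proposition by inspection.
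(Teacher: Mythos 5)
Your proof is correct and follows the same route the paper intends: the paper simply states ``From (\ref{EQ2}):'' and leaves the substitution implicit, and your use of the equivalent relation (\ref{EQ1}) with $X=-x,\,Y=y$ (resp.\ $X=x,\,Y=-y$), together with the sign extraction $(-1)^{-\cc(G)}=(-1)^{\cc(G)}$ and $(-y-1)^{-|V(G)|}=(-1)^{|V(G)|}(y+1)^{-|V(G)|}$ and the preceding proposition applied at $y'=y-1$ (resp.\ $y'=y+1$), is exactly the intended bookkeeping.
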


\bibliographystyle{amsplain}
\bibliography{biblio}

\end{document}